\pdfoutput=1
\documentclass[a4paper,UKenglish,cleveref, autoref, thm-restate]{lipics-v2021}

\hideLIPIcs  %uncomment to remove references to LIPIcs series (logo, DOI, ...), e.g. when preparing a pre-final version to be uploaded to arXiv or another public repository

\graphicspath{{./images/}}
\title{Local Limit of Random Regular Bipartite Planar Maps} 

\author{Nicolas Tokka}{MODAL'X, Université Paris-Nanterre, France \and IRIF, Université Paris-Cité, France \and \url{https://www.irif.fr/~tokka/} }{nicolas.tokka@irif.fr}{https://orcid.org/0009-0005-2491-8072}{}
\authorrunning{N. Tokka}

\Copyright{Nicolas Tokka} 

\ccsdesc[500]{Mathematics of computing~Generating functions}
\ccsdesc[500]{Mathematics of computing~Enumeration}
\ccsdesc[500]{Mathematics of computing~Random graphs}
\ccsdesc[500]{Mathematics of computing~Probability and statistics}

\keywords{Planar maps, random maps and trees, local convergence.} 

\category{} %optional, e.g. invited paper

\relatedversion{} %optional, e.g. full version hosted on arXiv, HAL, or other respository/website
\funding{ANR IsOMa "ANR-21-CE48-0007".}

\acknowledgements{We sincerely thank Marie Albenque and Laurent Ménard for their thoughtful review and valuable advice throughout this work.}

\nolinenumbers %uncomment to disable line numbering

\EventEditors{John Q. Open and Joan R. Access}
\EventNoEds{2}
\EventLongTitle{42nd Conference on Very Important Topics (CVIT 2016)}
\EventShortTitle{CVIT 2016}
\EventAcronym{CVIT}
\EventYear{2016}
\EventDate{December 24--27, 2016}
\EventLocation{Little Whinging, United Kingdom}
\EventLogo{}
\SeriesVolume{42}
\ArticleNo{23}
\usepackage{amsfonts}
\usepackage{mathrsfs} %Nice script letters (curved)
\usepackage{stmaryrd} %for \Mbr and bracket
\usepackage{mathtools}
\usepackage{dsfont} %for characteristic function
\usepackage{bm} %for bold grec math symbols
\usepackage{nicematrix}

\newcommand{\R}{\mathbb{R}}

\providecommand{\Z}{}

\renewcommand{\Z}{\mathbb{Z}}

\renewcommand{\P}{\mathbb{P}}
\newcommand{\cK}{\mathcal{K}}

\newcommand{\cT}{\mathcal{T}}

\newcommand{\sW}{\mathsf{W}}
\newcommand{\sw}{\bm{\mathsf{w}}}

\newcommand{\sP}{\mathsf{P}}

\newcommand{\s}{\mathsf{s}}

\newcommand{\rV}{\mathrm{V}} 
\newcommand{\rE}{\mathrm{E}} 
 
\newcommand{\rt}{\mathrm{t}} 
\newcommand{\rf}{\mathrm{f}}
\renewcommand{\rm}{\mathrm{m}}
\newcommand{\rS}{\mathrm{S}}
\newcommand{\cS}{\mathcal{S}}
\newcommand{\cC}{\mathcal{C}}

\renewcommand{\c}{\mathrm{c}}
\newcommand{\cW}{\mathcal{W}}
\newcommand{\cB}{\mathcal{B}}
\newcommand{\rW}{\mathrm{W}}
\newcommand{\rB}{\mathrm{B}}

\renewcommand{\S}{\mathcal{S}^d}

\newcommand{\brf}{\bm{\mathrm{f}}} 
\newcommand{\M}{\mathcal{M}^d}

\newcommand{\Mc}{\overline{\mathcal{M}}^d}
\newcommand{\Mf}{\mathcal{M}^{d,\brf}}
\newcommand{\Mfn}{\mathcal{M}^{d,\brf}_n}

\newcommand{\T}{\mathcal{T}^d}
\newcommand{\Tn}{\mathcal{T}^d_n}
\newcommand{\Tc}{\overline{\mathcal{T}}^d}

\usepackage{tikz}
\newcommand{\cstem}{% Closing stem
  \tikz[baseline=0.3ex, line cap=round, rotate=90]{
    \draw[line width=0.5pt] (0,0) -- (0.45em,0);
    \draw[line width=0.5pt] (0.45em,0) -- ++(0.8ex,0.4ex);
    \draw[line width=0.5pt] (0.45em,0) -- ++(0.8ex,-0.4ex);
    \draw[line width=0.5pt] (0.45em+0.8ex,0.4ex) -- ++(0ex,-0.8ex);
    \fill[line width=0.5pt] (0.45em,0) -- ++(0.8ex,0.4ex) -- ++(0,-0.8ex) -- cycle;}}
\newcommand{\ostem}{% Opening stem
  \tikz[baseline=0.3ex, line cap=round, rotate=90]{
    \draw[line width=0.5pt] (0,0) -- (0.45em,0);
    \draw[line width=0.5pt] (0.45em,0.4ex) -- ++(0.8ex,-0.4ex);
    \draw[line width=0.5pt] (0.45em+0.8ex,0) -- ++(-0.8ex,-0.4ex);
    \draw[line width=0.5pt] (0.45em,-0.4ex) -- ++(0,0.8ex);}}

\newcommand{\1}{\mathds{1}}
 
\newcommand{\dloc}{\mathrm{d}_\mathrm{loc}}

\begin{document}
\maketitle

\begin{abstract}
We prove the existence of the local limit of uniform random 
$d$-regular bipartite planar maps, for every $d\geq 3$, as the number of vertices tends to infinity. 
The proof relies on a bijection between maps and so-called blossoming trees established in a previous work. After proving local convergence of the associated decorated trees, we extend the bijection to infinite trees and transfer the convergence to planar maps. The limiting object is almost surely one-ended and recurrent for the simple random walk.
\end{abstract}

\section{Introduction}
In the last two decades, the study of local limits of large random planar maps has attracted significant interest. The subject was initiated by the seminal work of Angel and Schramm~\cite{AngelSchramm03}, who proved in 2003 that uniform planar triangulations with $n$ faces converge in distribution, for the local topology, to a probability measure supported on infinite triangulations. The limiting object is now known as the Uniform Infinite Planar Triangulation (UIPT). This result laid the foundations for the investigation of local limits of large random maps.
\medskip

Shortly thereafter, Krikun~\cite{Krikun06} established the analogous convergence of uniform quadrangulations toward the Uniform Infinite Planar Quadrangulation (UIPQ). Independently, Chassaing and Durhuus constructed another model of random infinite planar quadrangulation. Their approach relies on a powerful bijective perspective, using the Cori–Vauquelin–Schaeffer bijection~\cite{Schaeffer98} between quadrangulations and decorated plane trees. Afterwards, Ménard~\cite{Menard10} proved that both these infinite random quadrangulations have the same law. 

Chassaing and Durhuus’s method can be summarized in two steps. First, prove the local convergence of the associated random trees obtained by applying the Cori–Vauquelin–Schaeffer bijection. Second, extend this bijection to the case of infinite trees to define their infinite random quadrangulation. 

This general strategy has proved robust and has since been implemented in a variety of settings. It has been notably implemented via the generalized bijection known as the \emph{mobile bijection}~\cite{BDG04}. This framework has led to a large number of results on local limits of random maps, e.g.~\cite{CurienMenardMiermont13,MenardNolin14,Stephenson18,Stufler22}.

Another powerful bijective scheme has been used similarly to define families of infinite random maps. There are called \emph{mating-of-trees} bijections. These bijections generalize Mullin's bijection~\cite{Mullin67} and encode some families of decorated maps by two-dimensional walks, see for example~\cite{Chen17,GwynneHoldenSun23,Sheffield16}.

Our work is based on a third bijective paradigm between maps and \emph{blossoming trees} with charge constraints. While this paradigm was introduced twenty years ago~\cite{Schaeffer97,BousquetMelouSchaeffer03,BDG02}, it was not clear how to use it to derive probabilistic convergence results. The present work addresses precisely this gap using a recent version of the blossoming bijection due to Albenque, Ménard, and the author~\cite{AMT25}.
Specifically, we study local limits of uniform $d$-regular bipartite planar maps. Precise definitions are given in Section~\ref{sec1}. Our main result establishes the local convergence of these maps, as their number of vertices tends to infinity, toward a limiting random infinite map that we call the Uniform Infinite $d$-Regular Bipartite Planar Map ($d$-UIRBPM), see Theorem~\ref{theo:LocalConvergencedRegularMaps}.
\smallskip

Previous works also studied local convergence of bipartite maps~\cite{BjrnbergStefansson14,Carrance21,Stephenson18}, but we want to emphasize that, in our work, we study maps with vertex degree $d$, and which are furthermore required to be bipartite. In contrast, the models of maps studied in~\cite{BjrnbergStefansson14,Stephenson18} consist of maps with only control on face degrees, and which are bipartite (i.e. all faces are required to have even degree).  The only work with similar constraints is the convergence of Eulerian planar triangulations by Carrance~\cite{Carrance21}, which relies in particular on the so-called layer decomposition, which is both technical and specific to triangulations. The special case $d=3$ of the present article recovers this result in the dual setting. Moreover, the fact that we can have a simpler proof stems from the fact that, as already mentioned, we use another type of bijections.  In contrast to Carrance's approach, our work is more robust and allows to fully control the vertex degree distribution as we plan to discuss in future work, see Section~\ref{sec4}. Moreover, to our knowledge, this is the first proof of a local limit theorem for planar maps that is based entirely on the blossoming-tree paradigm, rather than on the mobile bijection aforementioned.

We proceed by first studying the local convergence of uniform well-charged trees corresponding to $d$-regular bipartite maps. We then show that these random trees converge in distribution to an explicit infinite plane tree with a unique spine, that is, a single infinite branch starting from the root. Moreover, we identify the limiting distribution as that of a multitype Bienaymé–Galton–Watson tree conditioned to survive. This is achieved in Section~\ref{sec2}.
In a second step, we extend the blossoming bijection to infinite trees and prove its continuity on the support of the limiting tree law. This allows us to transfer the local convergence of trees to the local convergence of maps, and therefore establishing the existence of the $d$-UIRBPM. This is covered in Section~\ref{sec3}.

The nature of $d$-UIRBPM implies directly that it is almost surely recurrent for the simple random walk. This follows from a theorem of Gurel-Gurevich and Nachmias in~\cite{GurelGurevichNachmias13}, which ensures recurrence for local limits of uniformly rooted finite maps whose root vertex degree has an exponential tail. This is automatically satisfied in our regular setting.
\smallskip

As mentioned above, beyond the regular case, the method developed here appears to be flexible and suggests several extensions, which are discussed in Section~\ref{sec4}. We strongly believe that uniform bipartite planar maps with bounded degrees and prescribed degree distributions also converge, for the local topology, to the distribution of an infinite bipartite planar map. In addition, this generalization for bipartite maps with vertex degree $2$ and $d$ could lead to the study of local limits of maps decorated by statistical physics models such as the Ising model or the hard particle model. We plan to tackle these questions in future work and give some indications of how to address them in Section~\ref{sec4}.

\section{Definitions and main result}\label{sec1}

\subsection{Maps}A \emph{planar map} is a proper embedding of a connected planar graph on the 2-dimensional sphere $\mathbb{S}^2$, considered up to orientation-preserving homeomorphisms. 

The \emph{edges} and \emph{vertices} of a map are the natural counterparts of the edges and vertices of the underlying graph. The \emph{faces} of a map $\rm$ are the connected components of the complement of its embedded graph. The sets of vertices, edges, and faces are denoted by $\mathrm{V}(\rm)$, $\mathrm{E}(\rm)$ and $\mathrm{F}(\rm)$, respectively. Note that loops and multiple edges are allowed. 

The \emph{corners} of a map are ordered pairs of consecutive half-edges around a vertex, where a \emph{half-edge} is one of the two components obtained by splitting an edge at its midpoint. The \emph{degree} of a vertex or a face is defined as the number of its incident corners. A map is \emph{$d$-regular} if all its vertices have degree $d$.

\smallskip

Maps are assumed to be \emph{rooted}, meaning that one of their corners is designated as the origin. This corner is called the root corner and is indicated by a double arrow in figures. The vertex and the face incident to this corner are called the \emph{root vertex} and the \emph{root face}, respectively.

A \emph{plane map} $(\rm,\rf)$ is a pair consisting of a map $\rm$ together with a marked face $\rf\in\mathrm{F}(\rm)$. A plane map has a canonical embedding in the plane via stereographic projection, by choosing the \emph{outer infinite face} as the marked face. In particular, the outer face is not necessarily its root face. Finally, a \emph{plane tree} is a plane map with a single face.

\paragraph*{Bipartite maps}
A \emph{bipartite} map is a map that admits a proper $2$-coloring of its vertices. Note that a bipartite map has exactly two proper 2-colorings, which differ only by swapping the color of all vertices. 

In this work, \emph{each bipartite map will be endowed with one of these two colorings}, say in black and white. We denote the black and white vertices of a bipartite map $\rm$ by $\rV_{\!\bullet}(\rm)$ and $\rV_{\!\circ}(\rm)$, respectively. Finally, a rooted bipartite map is said to be \emph{black-rooted} or \emph{white-rooted} depending on the color of its root vertex.

\paragraph*{Regular bipartite maps}
For any $d \geq 3$, we denote by $\M$ the set of all finite, black-rooted, $d$-regular bipartite planar maps. 

\begin{remark}
	Note that every $d$-regular bipartite planar map has the same number of black and white vertices. Thus, the total number of vertices is even. Conversely, for any $n\geq 1$, there exists such a map with exactly $2n$ vertices.
\end{remark}

For any $n \ge 1$, we denote by $\mathcal{M}^d_n$ the subset of $\M$ consisting of maps with $n$ black vertices. Similarly, the corresponding sets of such maps with an additional marked face are denoted by $\Mf$ and $\Mfn$, respectively.

Denote by $M^d_{n}$ the number of maps in $\mathcal{M}^d_n$, for all $n\geq 1$, and let $M^d(z)$ be its associated generating series, i.e.:
\begin{equation*}
	M^d(z) \coloneqq \sum\limits_{\rm\in\M}z^{\vert\rV_{\!\bullet}(\rm)\vert} = \sum\limits_{n\geq 1}M^d_{n}z^n.
\end{equation*}
Similarly, define ${M}^{d,\brf}_n$ and  ${M}^{d,\brf}(z)$ for maps in $\Mf$. A straightforward application of Euler's formula shows that:
\begin{equation}\label{eq:NbPlaneMaps}
	{M}^{d,\brf}_n = \left(2+(d-2)n\right)M^d_n.
\end{equation}

\subsection{Local convergence of random maps}
We first define a distance on $\M$. For $\rm\in\M$ and $R\geq 0$, let $B_R(\rm)$ denote the submap of $\rm$ consisting of all vertices at graph distance at most $R$ from the root vertex, together with all edges that have at least one extremity at distance less or equal to $R-1$ from the root vertex. Moreover, $B_R(\rm)$ inherits the root corner from $\rm$. For $\rm, \rm'\in\M$, we set:
\begin{equation}\label{eq:DefLocDist}
	\dloc\left(\rm,\rm' \right)\coloneqq {\left(1+\mathrm{sup}\left\{R\geq 0 : B_R(\rm)=B_R(\rm') \right\}\right)}^{-1}.
\end{equation}
The topology on $\M$ induced by $\dloc$ is known as the \emph{local topology}. The closure $(\Mc,\dloc)$ of the metric space $(\M,\dloc)$ is a Polish space, and the elements of $\Mc\setminus\M$ are referred to as  \emph{infinite $d$-regular bipartite rooted planar maps}. Finally, such a map $\rm$ is said to be \emph{one-ended} if, for every finite submap $\rm'\subset\rm$, exactly one connected component of $\rm\setminus \rm'$ contains infinitely many faces.
\medskip

Our main result is the following. 

\begin{theorem}\label{theo:LocalConvergencedRegularMaps}
For any $n\geq 1$, denote by $\P^d_n$ the uniform probability measure on $\mathcal{M}^d_{n}$.

Then, the sequence $\left(\P^d_n\right)_n$  converges weakly, for the  local topology, to a limiting probability measure $\P^d_\infty$ supported on the set of one-ended infinite $d$-regular bipartite planar maps.
\end{theorem}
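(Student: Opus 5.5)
The proof will follow the Chassaing--Durhuus strategy outlined in the introduction, with the blossoming bijection of~\cite{AMT25} in place of the Cori--Vauquelin--Schaeffer bijection.

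\textbf{Step 1: reduce to plane maps and trees.} By~\eqref{eq:NbPlaneMaps}, the number of plane maps ${M}^{d,\brf}_n$ is a deterministic multiple $2+(d-2)n$ of $M^d_n$. So a uniform element of $\Mfn$, with its marked face forgotten, is uniform on $\mathcal{M}^d_n$, and it suffices to prove local convergence for uniform plane maps. The blossoming bijection of~\cite{AMT25} sends $\Mfn$, possibly up to a fixed multiplicative factor coming from rerooting, to a family $\Tn$ of well-charged blossoming trees with $n$ black vertices, each carrying opening and closing stems. Under it, the root corner of the map corresponds to a corner of the tree that can be chosen uniformly.

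\textbf{Step 2: local limit of the trees.} I would encode $\Tn$ as a multitype Bienaymé--Galton--Watson tree. The types are the vertex colour together with the local charge, which takes values in a finite set because $d$ is fixed. Under a critical Boltzmann weight $z=\rho_d$, the radius of convergence of the tree generating series, the uniform tree on $\Tn$ is this critical multitype BGW tree conditioned to have $n$ black vertices. The charge constraint, namely that the total charge is fixed, must be handled either as part of the conditioning or through a cyclic-lemma style rerooting.

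The offspring mean matrix is finite and irreducible, and criticality follows from the square-root singularity of the tree series. Standard results, or a direct computation using the asymptotics $T_n\sim c\,\rho_d^{-n}n^{-3/2}$ obtained by singularity analysis, then give local convergence to Kesten's tree conditioned to survive. That limit has a unique spine, size-biased spinal vertices, and independent critical subtrees hanging off the spine. The concrete computation is to check, for every finite ball $t$, that
\[
\frac{T_{n-|t|}}{T_n}\to\rho_d^{|t|},
\]
with the correct type-dependent eigenvector weights. This is routine once the $n^{-3/2}$ asymptotics are in hand.

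\textbf{Step 3: extend the bijection and prove continuity.} This step is the main obstacle. On a one-spine infinite tree, I would define the closure map by matching every opening stem with its closing stem. A stem has a finite partner only if the charge walk along the contour returns to its level, which can happen on either side of the spine.

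The key claim is that, almost surely under the limiting law, every stem is matched within a finite distance along the contour. It should also hold that for each $R$ there is a finite radius $R'$ such that the tree ball of radius $R'$ determines the map ball $B_R$. This should follow from two facts:
\begin{itemize}
\item the charges accumulated by the independent subtrees on each side of the spine form a centered random walk, which takes arbitrarily large values of both signs infinitely often;
\item map distances are bounded below by a function of tree-height differences.
\end{itemize}
The second fact would use a geodesic or labelling bound coming from the structure of~\cite{AMT25}: each closure edge joins vertices whose tree heights differ by a controlled amount only after passing the spine.

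Given these facts, the closure is continuous on a full-measure set. The continuous mapping theorem then transfers the convergence of Step 2 to the maps, and this defines $\P^d_\infty$. I must also check that the uniform root corner in the map corresponds to the tree root asymptotically, which uses the fact that root corners are equidistributed among the $dn$ corners.

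\textbf{Step 4: one-endedness.} Removing a finite submap leaves the spine tail together with finitely many finite subtrees and closure edges. Every infinite component must contain infinitely many spine vertices. Since all but finitely many of the remaining vertices are linked through the spine, only one component can be infinite.
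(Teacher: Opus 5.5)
Your outline follows the paper's route. It reduces to uniform plane maps via the closure bijection, takes the local limit of the trees as a multitype BGW tree conditioned to survive, and closes one-spine trees via the contour walk of stems. It then uses continuity with the continuous mapping theorem, and gets one-endedness from the single spine. Your first bullet in Step~3 is exactly Lemma~\ref{pro:ClosureIsWellDefined}: the paper groups right-side stems by spine heights $2k-1,2k$ to get i.i.d.\ steps $\mathrm{Unif}\{0,\dots,d-2\}-\mathrm{Unif}\{0,\dots,d-2\}$.

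The step that would fail is your second bullet. Map distance is \emph{not} bounded below by any function of tree-height difference, because a single closure edge can join vertices of very different tree heights. For example, an opening stem of the root may be matched with the excess closing stem of an adjacent charge-$1$ black subtree, and that closing stem can sit arbitrarily deep in the subtree. Such configurations have positive $\sP^d_\infty$-probability, so the bound fails on $\S$.

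No uniform bound is needed, since continuity is only required at each fixed $T\in\S$. Once every stem is matched, $\Phi(T)$ is $d$-regular, so $B_R(\Phi(T))$ is finite and lies in some tree ball $B_{R_1}(T)$ with $R_1=R_1(T,R)$. This is how the paper obtains $R_1$.

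What your sketch then leaves implicit is the barrier argument that makes the matching local. Let $[k^-,k^+]$ be the index range of stems attached to $B_{R_1}(T)$, and set $x=\max_{[k^-,k^+]}\cC_T$. Let $K^\pm$ be the nearest indices on either side of $[k^-,k^+]$ where $\cC_T$ climbs back to level $x$ or above; these are finite by your first bullet. Then the window $[K^-,K^+]$ is closed under $\sigma_T$. Any $T'$ agreeing with $T$ on a ball containing all stems of spine height at most that of $K^\pm$ has the same window and the same matching inside it. Hence all edges at vertices of $B_{R_1}(T)$ coincide, and $B_R(\Phi(T'))=B_R(\Phi(T))$.

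Two minor points. First, your appeal to irreducibility requires treating stems as decorations rather than as a type, and handling the root separately. With stems as a (sterile) type the law is reducible, which is why the paper instead computes ball probabilities by Lagrange inversion. That computation involves the forest $\rW(z)^{m_k(\rt)}$ hanging below level $k$, and its limit carries a factor $m_k(\rt)$ (the choice of spine vertex) that your ratio $T_{n-|t|}/T_n$ omits. Second, your rerooting worry is moot: Proposition~\ref{theo:BijMapsTrees} is a bijection of rooted objects $\T_n\to\Mfn$ that identifies root corners.
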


The proof relies on a bijection between plane bipartite maps and a certain class of decorated trees, which we define in the next section. We first prove local convergence of this class of trees. Next, we show that the bijection extends naturally to the infinite setting, which allows us to conclude the proof.

\section{Local convergence of well-charged trees}\label{sec2}

In this section, we define a family of decorated plane trees, first introduced in~\cite{BousquetMelouSchaeffer03} and called \emph{well-charged trees}, which are in bijection with bipartite plane maps. We then analyze the local convergence of the uniform distribution on these trees as their size grows to infinity. 

\subsection{Bijection between bipartite plane maps and well-charged trees}

\paragraph*{Definition of well-charged trees}
In this work, a \emph{blossoming tree} is a \emph{bipartite} plane tree in which each corner may carry some half-edges. Here, bipartite means that the tree is endowed with one of its two proper $2$-colorings, in accordance with the definitions given in Section~\ref{sec1}. A half-edge attached to a black vertex is oriented outward and is referred to as an \emph{opening stem}, and a half-edge incident to a white vertex is oriented inward and is called a \emph{closing stem}. A rooted blossoming tree $\rt$ is called \emph{white} or \emph{black} according to the color of its root vertex, denoted by $\varnothing_\rt$. 

The \emph{offspring} of a vertex is the ordered collection of its incident half-edges, excluding the one that leads to its parent. The \emph{height} of a vertex is its distance from the root vertex, and the \emph{height} of a stem is the height of its unique incident vertex.

The \emph{charge} of a blossoming tree $\rt$ is the difference between the number of its closing stems and opening stems, and is denoted by $\c(\rt)$. If $\rt$ is rooted, then the \emph{charge} of a vertex  $v\in\rV(\rt)$ is defined as the charge of the subtree of $\rt$ rooted at $v$ and is denoted by $\c_\rt(v)$, or simply $\c(v)$ when the context is clear.

Finally, a rooted blossoming tree $\rt$ is \emph{well-charged} if it meets the following charge conditions: 
\begin{equation}\label{eq:ChargedConditions}
\left({C}\right):~\begin{cases}
\c_\rt(v)\leq 1, ~\forall v\in\rV_{\!\bullet}(\rt),\\
\c_\rt(v)\geq 0, ~\forall v\in\rV_{\!\circ}(\rt).
\end{cases}
\end{equation}

For $n\geq 1$, we denote by $\T_n$ the set of black-rooted well-charged $d$-regular trees of charge $0$ with $n$ black vertices, and we write $\T\coloneqq\sqcup_{n\geq 1}\T_n$.

\paragraph*{Closure of well-charged trees}
Following~\cite{AMT25,BousquetMelouSchaeffer03}, we now describe the \emph{closure} of a blossoming tree of charge $0$. Let $\rt$ be a blossoming tree of charge $0$. Consider the ordered, cyclic sequence of its opening and closing stems obtained by performing a clockwise walk around $\rt$. 

\begin{figure}[t!]
  	\centering
  	\includegraphics[width=0.9\linewidth ,page=1]{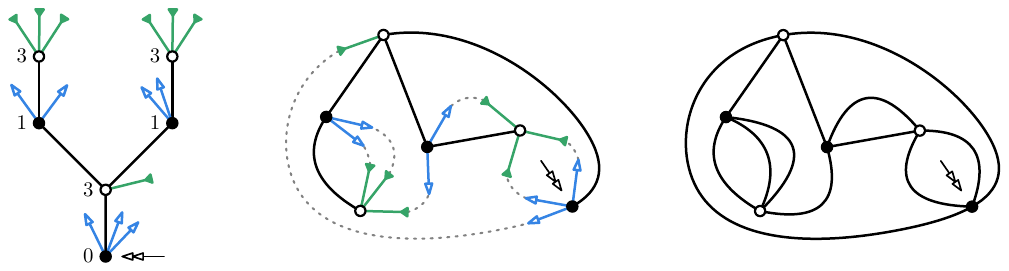}
 	\caption{A tree $\rt$ in $\mathcal{T}^4_3$ with its charge function (left), the same tree drawn differently with the matching of its stems (middle), and its closure (right).}\label{fig:Closure}
\end{figure}

The sequence induces a perfect matching between the opening and closing  stems as in a parenthesis word. The closure is then obtained by merging each matched pair of stems into a single edge. Each edge is drawn in the plane so that, when oriented from the opening stem to the closing stem, the infinite face lies to its left. 

We can prove that the resulting object is a plane map and that it does not depend of the order in which the edges are added.

\begin{proposition}[Theorem 2.5, \cite{AMT25}, see also~\cite{BousquetMelouSchaeffer03}]\label{theo:BijMapsTrees}
For any $d\geq 3$ and $n\geq 1$, the \emph{closure} operation $\Phi$ is a one-to-one correspondence between $\T_n$ and $\Mf_n$. In particular, 
\begin{equation*}
\vert\T_n\vert = M^{d,\brf}_n.
\end{equation*}
\end{proposition}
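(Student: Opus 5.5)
The plan is to check that $\Phi$ maps $\T_n$ into $\Mf_n$, then construct an explicit inverse by an opening procedure on plane maps; the counting identity $\vert\T_n\vert = M^{d,\brf}_n$ follows immediately from the bijection.

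\textbf{Step 1: $\Phi$ lands in $\Mf_n$.} Take $\rt\in\T_n$.
\begin{itemize}
\item Since $\c(\rt)=0$, the clockwise cyclic word of stems has as many closing as opening stems, so the parenthesis matching is perfect.
\item Each edge is drawn with the infinite face on its left, so the edges are nested like a parenthesis system and can be drawn without crossings. The result is a plane map with a marked (outer) face.
\item Every vertex keeps its degree, since stems become half-edges. Hence the map is $d$-regular.
\item Every new edge joins a black vertex (opening stem) to a white vertex (closing stem). Hence the map is bipartite with the inherited coloring, and it has $n$ black vertices.
\item It is black-rooted, taking as root the root corner of $\rt$.
\end{itemize}

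\textbf{Step 2: characterize the image via a canonical orientation.} In the closure, orient each tree edge from parent to child and each closure edge from its opening stem to its closing stem. I would show that in the resulting orientation:
\begin{itemize}
\item the charge conditions $(C)$ are equivalent to the non-tree edges forming a minimal, accessible orientation with prescribed outdegrees;
\item concretely, the outdegree of a black vertex exceeds that of a white vertex by the amount dictated by $d$, in the spirit of the $\alpha$-orientations of Bernardi and Felsner;
\item the charge $\c(v)$ counts the stems of the subtree at $v$ that are matched outside it, with a sign.
\end{itemize}
Then $\c(v)\le 1$ at black vertices and $\c(v)\ge 0$ at white vertices translate exactly into a condition on the cut between the subtree and the rest of the map.

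\textbf{Step 3: construct the inverse.} Given $(\rm,\rf)\in\Mf_n$:
\begin{itemize}
\item Use the existence and uniqueness of a minimal orientation with the prescribed outdegree function. Existence comes from a Hall/flow argument using $d$-regularity and the Euler-type count behind \eqref{eq:NbPlaneMaps}. Uniqueness of the minimal one comes from Felsner's distributive-lattice structure, with minimality meaning no counterclockwise cycle relative to $\rf$.
\item Then perform the opening: walk around $\rm$ starting from the root corner, and cut each edge traversed against its orientation into an opening stem and a closing stem.
\item Show that the remaining edges form a spanning tree, using accessibility from the root and the absence of counterclockwise cycles.
\item Show that this tree satisfies $(C)$ and has charge $0$.
\end{itemize}
The two maps compose to the identity because the closure's matching recovers exactly the cut edges, with the face on the correct side.

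\textbf{Main obstacle.} The hard part is Step 2/3: identifying $(C)$ with minimality and accessibility of the orientation, and proving that the opening yields a tree. This requires showing that any cycle in the non-cut edges would be a clockwise cycle, contradicting the traversal rule. I would first try to prove it directly by induction on the number of closure edges, removing one innermost matched pair (a leaf-like local closure) at a time. Failing that, I would simply invoke \cite[Theorem 2.5]{AMT25}, which is exactly this statement.
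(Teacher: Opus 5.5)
The paper gives no argument of its own here. It imports the proposition from \cite[Theorem 2.5]{AMT25} (see also \cite{BousquetMelouSchaeffer03}), so your Step~1 plus your fallback citation is exactly the paper's treatment. The self-contained route of Steps~2--3, however, fails as written, and even once repaired its central step is left undone.

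\textbf{The orientation in Step~2 has no prescribed outdegrees.} You orient tree edges parent-to-child and closure edges from opening to closing stem. Then a white vertex with $k$ black children has outdegree $k$, and by Lemma~\ref{Cla:ChargeOffspringRegular} every $k\in\{0,\dots,d-1\}$ occurs. The root also gets outdegree $d$ instead of $d-1$. So the outdegrees depend on the tree, not on the map alone. The ``unique minimal orientation with the prescribed outdegree function'' of Step~3 is therefore not defined, and Felsner's lattice has nothing to act on.

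The repair is to orient tree edges \emph{towards} the root and keep closure edges black-to-white. This gives the following.
\begin{itemize}
\item Every black vertex, root included, has outdegree $d-1$, and every white vertex has outdegree $1$.
\item The white-to-black edges form a perfect matching, pairing each white vertex with its parent.
\item Given $d$-regularity and charge $0$, $(C)$ is equivalent to each black vertex having exactly one white child, i.e.\ to these outdegrees.
\item For an arbitrary map in $\Mf_n$, such an orientation exists by K\"onig's theorem for regular bipartite graphs; no Euler count is needed.
\item A short cut-counting argument shows that in any such orientation every vertex can reach the root.
\item Minimality with respect to $\rf$ comes from the drawing rule for closure edges, not from $(C)$.
\end{itemize}

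\textbf{The opening itself is not carried out.} This is what you call the main obstacle. You need to define the opening and prove that it inverts $\Phi$ in both directions. In particular, opening $\Phi(\rt)$ must cut exactly the closure edges of $\rt$; your sentence ``the two maps compose to the identity'' only addresses the other composition.

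Step~3 also glosses over a feature specific to this bijection: the root corner is an arbitrary black corner and is generally not incident to the marked face. Already for $d=3$, $n=1$, two of the three trees in $\mathcal{T}^3_1$ have their root corner enclosed by a closure edge. This is why $\vert\T_n\vert$ carries the face factor $2+(d-2)n$. So the opening must in general start from a corner not incident to the face with respect to which the orientation is minimal. That annular situation is more delicate than the classical opening from a corner of the outer face.

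Until that is done, the proposal is a plan rather than a proof. Invoking \cite{AMT25} closes it, exactly as the paper does.
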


\subsection{Convergence of random well-charged trees}
Let $(\Tc,\dloc)$ denote the Polish space obtained as the closure of the metric space $\left(\T,\dloc\right)$ under the local topology, which is defined analogously to the case of maps. The elements of $\Tc\setminus\T$ are referred to as \emph{infinite $d$-regular blossoming trees}. A \emph{spine} of a tree $\rt\in\Tc$ is an infinite path of $\rt$ starting at its root vertex. 

We now extend the notion of charge to infinite trees in $\Tc$. Let $T$ be such a tree. A \emph{charge function} on $T$ is a function $\c:\rV(T)\rightarrow\Z$ satisfying the following conditions:
\begin{equation*}
	\c(\varnothing_T)=0 \quad\text{and}\quad \forall v\in\rV(T), ~ \c(v)=n_{ \cstem}(v) - n_{ \ostem}(v) + \sum_{1\leq j\leq k}{\c(u_j)},
\end{equation*}
where $n_{ \cstem}(v)$ and $n_{ \ostem}(v)$ denote the numbers of closing and opening stems incident to $v$, respectively, and $u_1,\ldots,u_k$ denote the vertices incident to $v$, different from its parent. We obtain the following by induction.

\begin{claim}
If $T$ has a single spine, then it admits a unique charge function. In that case, we say that $T$ is \emph{well-charged} if its charge function satisfies the conditions $({C})$ given in~\eqref{eq:ChargedConditions}
\end{claim}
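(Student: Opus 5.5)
The plan is to build the charge function explicitly along the spine and check that there was no freedom in the construction. Let $T\in\Tc$ have a single spine $\varnothing_T=s_0,s_1,s_2,\dots$. Every vertex $v\notin\{s_k\}_{k\ge0}$ has a finite subtree. Otherwise, by König's lemma (vertex degrees are bounded by $d$), that subtree would contain an infinite descending path. Prepending the path from the root to $v$ would then give a second spine.

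\textbf{Values off the spine.} For such $v$, the recursive relation involves only vertices of the finite subtree $T_v$. Induction on the height of $T_v$, starting from the leaves where $\c(v)=n_{\cstem}(v)-n_{\ostem}(v)$, shows that any charge function must satisfy $\c(v)=\c(T_v)$, the usual charge of the finite blossoming tree $T_v$. These values are therefore forced.

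\textbf{Values on the spine.} For $k\ge0$, apply the relation at $s_k$ and isolate the spine child:
\begin{equation*}
\c(s_{k+1}) = \c(s_k) - n_{\cstem}(s_k) + n_{\ostem}(s_k) - \sum_{u} \c(u),
\end{equation*}
where the sum ranges over the children $u\neq s_{k+1}$ of $s_k$. These children are off the spine, so their charges are already determined. Together with the normalisation $\c(s_0)=\c(\varnothing_T)=0$, induction on $k$ determines $\c(s_k)$ for every $k$. This proves uniqueness.

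\textbf{Existence.} Define $\c$ by the two rules above. The defining identity then holds at every off-spine vertex by additivity of charge over finite subtrees. It holds at every spine vertex by construction, since the displayed formula is exactly the identity at $s_k$ rearranged. Hence $\c$ is a charge function.

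No step is a real obstacle. The one point needing care is the finiteness of off-spine subtrees, which is what makes their charges well defined; König's lemma with bounded degree handles it.
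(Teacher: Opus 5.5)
Your proposal is correct and is the induction the paper has in mind; the paper's entire justification is the phrase ``by induction''. Off-spine subtrees are finite, so their charges are forced, and the normalisation $\c(\varnothing_T)=0$ then propagates down the spine. Your K\"onig's-lemma argument makes explicit the finiteness of off-spine subtrees, which the paper leaves implicit.
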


The main result of this section is the following. 
\begin{theorem}\label{theo:WeakConvergencedRegularTrees}
For any $n\geq 1$, denote by $\sP_n^d$ the uniform probability distribution on $\Tn$.

Then, the sequence $\left(\sP^d_n\right)_n$  converges weakly, with respect to the  local topology, to a limiting probability measure $\sP^d_\infty$ supported on the set of infinite $d$-regular well-charged blossoming trees with a single spine.
\end{theorem}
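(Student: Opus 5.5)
We will prove the statement by a Kesten-type argument: encode the uniform tree in $\Tn$ as a conditioned multitype Bienaymé–Galton–Watson tree, and identify the limit as the corresponding tree conditioned to survive, with a size-biased spine. The types record the color of a vertex together with its charge. The conditions $(C)$ force black vertices to have charge in $\{\dots,0,1\}$ and white vertices charge in $\{0,1,\dots\}$. Moreover, in a $d$-regular tree each vertex carries exactly $d-1$ offspring half-edges (stems or children), so charges are bounded: a black vertex has charge at least $-(d-1)$ and a white vertex at most $d-1$. Hence the type space is finite.

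\textbf{Step 1 (decomposition and generating functions).} For each type $(\bullet,c)$ or $(\circ,c)$ we introduce the generating series $T_{\bullet,c}(z)$, respectively $T_{\circ,c}(z)$, of well-charged planted subtrees with root of that type, counted by black vertices. Decomposing at the root gives a finite polynomial system: the root's $d-1$ offspring slots are filled with stems or subtrees whose charges sum correctly and respect $(C)$. This system is positive and, assuming irreducibility, strongly connected. By the Drmota–Lalley–Woods theorem, all the series share a common dominant singularity $\rho$ with square-root behaviour, and $[z^n]T_{\bullet,0}\sim C\rho^{-n}n^{-3/2}$ along the appropriate lattice. We can cross-check the exponent with Proposition~\ref{theo:BijMapsTrees} and \eqref{eq:NbPlaneMaps}: map counts give $n^{-5/2}$, and multiplying by $(2+(d-2)n)$ gives $n^{-3/2}$. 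Taking the weights $\rho$ per black vertex and the values $T_{\cdot,c}(\rho)$, we obtain a critical multitype BGW tree whose law, conditioned on $n$ black vertices, is exactly $\sP^d_n$.

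\textbf{Step 2 (local convergence).} Fix $R$ and a finite ball $\tau$ of height $R$ with leaves at height $R$ of given types $c_1,\dots,c_k$. Then
\[
\sP^d_n\bigl(B_R=\tau\bigr)=\rho^{|\tau|}\,\frac{[z^{n-|\tau|}]\prod_i T_{c_i}(z)}{[z^n]T_{\bullet,0}(z)},
\]
where $|\tau|$ counts the black vertices strictly inside the ball. By the singularity analysis of Step 1, each $T_{c_i}$ has an expansion $a_i-b_i\sqrt{1-z/\rho}$, so the product's coefficient is asymptotically $\sum_i b_i\prod_{j\ne i}a_j$ times the same $n^{-3/2}$ rate. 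Hence the ratio converges to an explicit limit of the form $\rho^{|\tau|}\sum_i b_i\prod_{j\neq i}a_j/b_{\bullet,0}$. This is the law of the BGW tree biased along a single spine, where a leaf of type $c$ is biased by $b_c$, the right eigenvector of the mean matrix. To conclude we must show the limits sum to one. We will get this either from tightness, since the degrees are bounded and so tightness in the local topology is automatic (in fact the space of balls of radius $R$ is finite), or directly from $b$ being a harmonic eigenvector of the mean matrix.

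\textbf{Step 3 (support).} Under the limit, the spine is an infinite path and off-spine subtrees are independent critical BGW trees, hence almost surely finite. So the limit tree has exactly one spine. Its charge function is then unique by the Claim. For vertices off the spine, the charge coincides with the charge of the finite subtree, so $(C)$ holds. For spine vertices, the charge equals the type label carried in the limit, because local convergence preserves the type labels of the finite approximants and these satisfy $(C)$.

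The main obstacle is Step 1: setting up the multitype system precisely, in particular how the charge of a vertex constrains the charges of its children together with stems, and checking strong connectivity and aperiodicity (or handling periodicity). Without these, the uniform $n^{-3/2}$ coefficient asymptotics for all types simultaneously is not available. I would first write the system explicitly for $d=3$, verify irreducibility by hand, and then argue that the same reachability argument works for general $d$.
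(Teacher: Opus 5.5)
\textbf{The gap is in Step 1, which you yourself flag.} You \emph{assume} that the (color, charge) system is irreducible. As you have set it up, it is not. So Drmota--Lalley--Woods cannot be invoked, and its conclusion (all series share a square-root singularity at a common $\rho$, with $n^{-3/2}$ coefficient asymptotics) is actually false for most of your components.

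The missing idea is the rigidity of charges in the $d$-regular case, which is the paper's Lemma~\ref{Cla:ChargeOffspringRegular}:
\begin{itemize}
\item Counting half-edges in a planted subtree with $b$ black and $w$ white vertices shows that its charge is $d(w-b)+1$ if its root is black, and $d(w-b)-1$ if its root is white.
\item A white vertex has charge at least $0$ by $(C)$. It has charge at most $d-1$, since each of its $d-1$ offspring slots contributes at most $1$. Being $\equiv -1 \pmod d$, it is therefore exactly $d-1$.
\item This forces every slot of a white vertex to contribute exactly $1$. So every non-root black vertex has charge $1$, and hence exactly one white child and $d-2$ opening stems.
\item The root has $d$ slots, not $d-1$ as you wrote. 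It has one white child and $d-1$ stems.
\end{itemize}
Consequently, your types $(\circ,c)$ with $c<d-1$, and your non-root types $(\bullet,c)$ with $1-d<c<1$, all have identically zero series. The type $(\bullet,1-d)$, a lone black vertex with $d-1$ opening stems and series $z$, is well-charged but can never hang below a white vertex. A hand check for $d=3$ would reveal these vanishing and unreachable components rather than irreducibility.

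\textbf{What the rigidity buys.} With it, the whole system collapses to $\rW(z)=(1+\rB(z))^{d-1}$ and $\rB(z)=z(d-1)\rW(z)$, where the factor $d-1$ records the position of the unique white child. This is the single aperiodic Lagrangian equation \eqref{LagrangianEqB1}. The paper then uses Lagrange inversion to get exact coefficients and the explicit limit \eqref{eq:ProbaToInterpret}, so neither DLW nor transfer theorems are needed. Pruning to the two surviving types and then applying DLW would also close your gap, but it requires exactly this counting. Your Step 3 is essentially the paper's Appendix~\ref{appA}.

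\textbf{Smaller points.}
\begin{itemize}
\item Your finite-$n$ formula in Step 2 carries a spurious factor $\rho^{|\tau|}$. The exact probability is $[z^{n-|\tau|}]\prod_i T_{c_i}(z)/[z^n]T_{\bullet,0}(z)$, and $\rho^{|\tau|}$ only appears in the limit.
\item In general the charges of boundary vertices are not determined by the ball, so you would have to sum over admissible labellings. This is harmless here, since charges are deterministic.
\item Charges are not local functionals, so in Step 3 you should not argue that \emph{local convergence preserves the labels}. Instead, argue that the labels of the size-biased tree satisfy the charge recursion with root label $0$, and then invoke the uniqueness in the Claim.
\end{itemize}
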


\subsection{Proof of Theorem~\ref{theo:WeakConvergencedRegularTrees}}\label{subsec:ProofLocalConvTree}
Let $\bm{T}_n$ be a random tree with distribution $\sP^d_n$. Because (almost surely) all the vertices of $\bm{T}_n$ have degree $d$, the sequence $(\sP^d_n)_n$ is \emph{tight} for the local topology. Therefore, to prove Theorem~\ref{theo:WeakConvergencedRegularTrees}, excluding the single spine property, it is enough to show that for any $k\geq 0$ and $\rt\in\T$,
\begin{equation}\label{eq:AsymptoticToCompute}
	\lim_{n\to\infty}\P\big( B_k(\bm{T}_n) = B_k(\rt)\big) \text{ exists}.
\end{equation}
To this end, we first study in greater detail the structure of the finite well-charged trees.

\paragraph*{Structure of well-charged trees and generating functions}

We begin by investigating the possible charge values and offspring for trees in~$\T$. A direct consequence of the degree constraints yields the following result. 
\begin{lemma}\label{Cla:ChargeOffspringRegular}
Let $\rt\in\T$. Then, for any $v\in\rV(\rt)$:
\begin{itemize}
	\item If $v\in\rV_{\!\circ}(\rt)$, then $\c(v)=d-1$. Moreover, its ordered offspring consists of $d-1$ elements, each of which is either a closing stem or a black vertex.
	\item If $v\in\rV_{\!\bullet}(\rt)$, then $\c(v)=0$ if $v$ is the root vertex and $\c(v)=1$ otherwise. Moreover, its ordered offspring  consists of a single white vertex together with opening stems, exactly $d-1$ if $v$ is the root vertex and $d-2$ otherwise.
\end{itemize}
Moreover, all the possibilities can occur. 
\end{lemma}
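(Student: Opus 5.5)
Let $\rt\in\T$; the argument goes by strong induction on the subtree from the leaves upward, using only the degree constraint, the charge conditions $(C)$, and bipartiteness.

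\textbf{Black vertices are incident only to white vertices and opening stems.} Since stems at black vertices are opening stems and neighbours of black vertices are white, the offspring of a black vertex $v$ is made of white children and opening stems only. Let $v$ have $k$ white children $w_1,\dots,w_k$ and $s$ opening stems. Then
\begin{equation*}
\c(v)=\sum_{i=1}^k \c(w_i)-s .
\end{equation*}
We have $k+s=d-1$ if $v$ is not the root (one half-edge goes to its parent) and $k+s=d$ if $v$ is the root.

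\textbf{Bounds on white vertices.} A white vertex $w$ has offspring made of closing stems and black children only. Its offspring has size at most $d-1$, since $w$ is never the root (the tree is black-rooted). By the induction hypothesis applied to its black children, each child has charge $1$. Hence
\begin{equation*}
\c(w)=\#\{\text{closing stems}\}+\#\{\text{black children}\}=d-1 .
\end{equation*}
This is the claimed value for white vertices, provided every black non-root vertex has charge $1$.

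\textbf{Black non-root vertices.} Using $\c(w_i)=d-1$ from the induction hypothesis on its white children,
\begin{equation*}
\c(v)=k(d-1)-(d-1-k)=kd-(d-1).
\end{equation*}
The condition $\c(v)\le 1$ forces $kd\le d$, so $k\le 1$. If $k=0$, then $\c(v)=-(d-1)$, and the charge of the white parent $w$ of $v$ becomes $\c(w)\le (d-2)-(d-1)<0$, since the other $d-2$ offspring contribute at most $1$ each. This violates $(C)$ at $w$. Hence $k=1$, $\c(v)=1$, and there are $d-2$ opening stems.

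To make this rigorous I will run the induction on the statement ``every black non-root vertex has exactly one white child and every white vertex has charge $d-1$''. For the $k=0$ case I use a direct bound: each offspring of a white vertex contributes at most $1$, since black non-root vertices have charge $\le 1$ by $(C)$. Hence $\c(w)\le d-1$ always. If some black child of $w$ had $k=0$, then $\c(w)\le (d-2)-(d-1)=-1<0$, a contradiction. So any black child of a white vertex has $k\ge1$, and the previous computation gives $k=1$.

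\textbf{The root.} Let $k,s$ be the white children and opening stems of the root, with $k+s=d$. Then
\begin{equation*}
0=\c(\rt)=k(d-1)-(d-k)=kd-d,
\end{equation*}
which forces $k=1$ and $s=d-1$. The root also has $k\ge1$ because the tree contains white vertices. The case of a lone black root with $d$ opening stems has charge $-d\neq0$, so it is excluded anyway.

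\textbf{Realisability.} I will exhibit examples for each possibility:
\begin{itemize}
\item a white vertex whose $d-1$ offspring are any chosen pattern of closing stems and black vertices, where each black vertex carries a white child whose offspring is all closing stems, with $d-2$ opening stems;
\item such a configuration placed under a black root with $d-1$ opening stems.
\end{itemize}
Here $(C)$ holds by the computed charges, and the total charge is $0$.

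\textbf{Main obstacle.} The delicate step is the ordering of the induction. The value of the white charge depends on the black children, while excluding $k=0$ at a black vertex uses its white parent. The upper bound $\c(w)\le d-1$, valid a priori from $(C)$, resolves this circularity.
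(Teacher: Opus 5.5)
Your proof is correct, and it is the routine verification the paper compresses into ``a direct consequence of the degree constraints'': the bottom-up induction, with the a priori bound $\c(w)\le d-1$ from $(C)$ at black children ruling out a black vertex with no white child, the root computation $kd-d=0$, and your explicit tree together establish every claim. The only slip is that a white vertex has \emph{exactly} $d-1$ offspring, not ``at most'' $d-1$, and exactly $d-1$ is what your equality $\c(w)=d-1$ uses.
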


Next, for enumerative purposes, we study the \emph{recursive decomposition} of well-charged trees. A \emph{planted} blossoming tree is a non-rooted blossoming tree with an additional distinguished dangling half-edge, which is neither an opening nor a closing stem. The  unique vertex incident to this distinguished half-edge is called the \emph{root vertex}. Such a tree is called \emph{black} or \emph{white}, depending on the color of its root vertex. Finally, a planted blossoming tree is \emph{well-charged} if it satisfies the same charge conditions $({C})$ as before, see~\eqref{eq:ChargedConditions}.

\begin{figure}[t!]
  	\centering
  	\includegraphics[width=0.6\linewidth ,page=3]{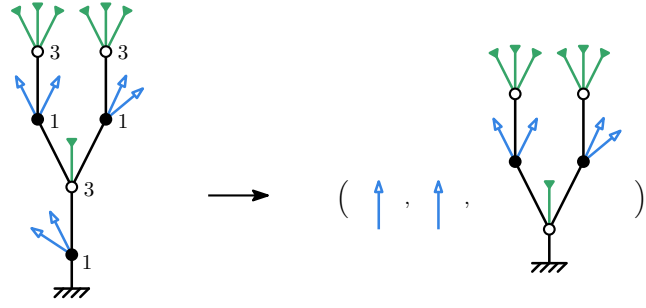}
 	\caption{A tree in $\cB$ with its charge function, and its root-decomposition. Here $d=4$.}\label{fig:RootDecomp}
\end{figure}

The \emph{root-decomposition} of a rooted or planted well-charged tree $\rt$ consists of the finite sequence, ordered according to the planar embedding, of stems and planted well-charged trees incident to the root vertex.  
\medskip

Let $\cB$ be the set of all finite planted well-charged \emph{black} trees with charge $1$, and let $\cW$ be the set of all finite planted well-charged \emph{white} trees with charge ${d-1}$. 
Denote by $\rB(z)$ and $\rW(z)$ their associated generating series:
\begin{equation*}
	\rB(z) = \sum\limits_{\rt\in\cB}z^{\vert\rV_{\!\bullet}(\rt)\vert} \quad\text{and}\quad \rW(z) = \sum\limits_{\rt\in\cW}z^{\vert\rV_{\!\bullet}(\rt)\vert}.
\end{equation*}
Then, we have the following result. 

\begin{lemma} 
The generating functions satisfy the following relations:
\begin{equation*}
	\rW(z)= (1+ \rB(z))^{d-1} \text{,}\quad \rB(z) = z(d-1)\rW(z) \text{,}\quad {M}^{d,\brf}(z) = zd\,\rW(z),
\end{equation*} 
and $\rB(z)$ is characterized by the Lagrangian equation:
\begin{equation}\label{LagrangianEqB1}
	\rB(z) = z(d-1)(1+ \rB(z))^{d-1}.
\end{equation} 
Moreover, these generating functions share the common radius of convergence
\begin{equation*}
	\rho_d=(d-2)^{(d-2)}/(d-1)^d
\end{equation*}
\end{lemma}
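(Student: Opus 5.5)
The argument is a direct root-decomposition combined with Lemma~\ref{Cla:ChargeOffspringRegular}. First take a tree in $\cW$, i.e.\ a planted white tree of charge $d-1$. Its root vertex has degree $d$, and one half-edge is the distinguished dangling half-edge. So its offspring is an ordered sequence of $d-1$ slots.

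By the argument of Lemma~\ref{Cla:ChargeOffspringRegular}, which applies verbatim to planted trees, each slot is either a closing stem or a planted black subtree. A closing stem contributes charge $1$, so the total charge $d-1$ forces every planted black subtree to have charge exactly $1$. The black-vertex conditions in $(C)$ say each such subtree has charge at most $1$. Conversely, any choice of $d-1$ slots, each a closing stem or an element of $\cB$, gives a well-charged planted white tree of charge $d-1$: the root condition $\c\geq 0$ holds, and the subtrees are well-charged by assumption. Closing stems carry no black vertex, so this bijection gives $\rW=(1+\rB)^{d-1}$.

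Next take a tree in $\cB$. Its root is black with $d-1$ offspring slots, and charge $1$ forces exactly one white planted subtree (of charge $d-1$) and $d-2$ opening stems. This is the counting in Lemma~\ref{Cla:ChargeOffspringRegular}: white subtrees have charge at least $d-1$ in the regular setting, and opening stems have charge $-1$. The white subtree can sit in any of the $d-1$ positions, and the root contributes one black vertex, giving $\rB=z(d-1)\rW$. For $\T$ the root has $d$ slots: one white subtree in $\cW$ and $d-1$ opening stems, for total charge $0$. The root corner fixes the cyclic position, giving $d$ choices, hence $z\,d\,\rW$. Proposition~\ref{theo:BijMapsTrees} identifies this series with $M^{d,\brf}$. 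Substituting the first relation into the second gives~\eqref{LagrangianEqB1}, and this Lagrangian equation determines $\rB$ uniquely as a power series with $\rB(0)=0$.

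For the radius of convergence, I would set $\rB=(d-1)zu$... or more simply write $\rB=z\phi(\rB)$ with $\phi(x)=(d-1)(1+x)^{d-1}$. This $\phi$ is aperiodic with nonnegative coefficients and is entire, so the standard Lagrangian singularity analysis (e.g.\ Flajolet--Sedgewick, Thm.~VI.6) applies. It gives $\rho_d=\tau/\phi(\tau)$, where $\tau$ solves $\phi(\tau)=\tau\phi'(\tau)$, i.e.\ $1+\tau=(d-1)\tau$, so $\tau=1/(d-2)$. Then
\[
\rho_d=\frac{1/(d-2)}{(d-1)\left(\frac{d-1}{d-2}\right)^{d-1}}=\frac{(d-2)^{d-2}}{(d-1)^d}.
\]
Since $\rW=\rB/(z(d-1))$ and $M^{d,\brf}=zd\rW$ are obtained from $\rB$ by multiplying by $z^{\pm1}$ and constants, they have the same radius.

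The main obstacle is the rigorous justification of the charge forcing: that black non-root vertices have charge exactly $1$ and white vertices charge exactly $d-1$. I would prove this by induction on height from the leaves upward, using the upper bound $\c\leq 1$ on black vertices and degree counting. This is precisely the content of Lemma~\ref{Cla:ChargeOffspringRegular}, which I am allowed to invoke, so the remaining work is routine.
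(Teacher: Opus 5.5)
Your proposal is correct and is essentially the paper's proof. You obtain the three relations from the root-decomposition via Lemma~\ref{Cla:ChargeOffspringRegular} (with Proposition~\ref{theo:BijMapsTrees} for $M^{d,\brf}$) and get~\eqref{LagrangianEqB1} by substitution. You then use the tree-function analysis $\rho_d=\tau/\phi(\tau)$ with $\tau=1/(d-2)$; the paper cites Flajolet--Sedgewick Prop.~IV.5 rather than Thm.~VI.6, which makes no difference here. Finally, you transfer the radius to $\rW$ and $M^{d,\brf}$ through their explicit expressions in $\rB$, as the paper does.

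One minor remark: the leaves-upward induction you sketch would also need $\c\geq 0$ on white vertices, not just $\c\leq 1$ on black vertices and degree counting. A black leaf has charge $1-d$, which satisfies $\c\leq 1$, and only its white parent's lower bound rules it out. Since you invoke Lemma~\ref{Cla:ChargeOffspringRegular} exactly as the paper does, nothing is actually missing.
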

\begin{proof}
The first two relations follow from Lemma~\ref{Cla:ChargeOffspringRegular}. The statement concerning the offspring of the root vertex of a tree $\rt\in\T$ of this lemma, combined with Proposition~\ref{theo:BijMapsTrees} yields the third relation.

The Lagrangian equation characterizing $\rB(z)$ follows by direct substitution, and allows one to determine its radius of convergence, see~\cite[Prop IV.5]{FS09}. Finally, we can derive explicit expressions of $\rW(z)$ and ${M}^{d,\brf}(z)$ in terms of $\rB(z)$, from which we conclude that they share the same radius of convergence.
\end{proof}

\paragraph*{Explicit computation of~\eqref{eq:AsymptoticToCompute}}
In order to prove the existence of~\eqref{eq:AsymptoticToCompute}, as $n$ tends to infinity, and compute its value, we introduce the following notation. We denote by $m_k(\rt)$ the number of vertices of $\rt$ at height $k$, and we set $n_k(\rt)=\vert\rV_{\!\bullet}(B_k(\rt))\vert$. To begin, assume that $k$ is odd. Thus, the cut of $\rt$ at height $k$ leads to $B_k(\rt)$ together with an ordered collection of stems and $m_k(\rt)$ trees in $\cW$. This yields the following expression:
\begin{equation*}
	\P\big( B_k(\bm{T}_n) = B_k(\rt)\big) = \frac{1}{{M}^{d,\brf}_n}\cdot [z^{n-n_k(\rt)}]\rW(z)^{m_k(\rt)} = \frac{[z^{n-n_k(\rt)}]\rW(z)^{m_k(\rt)} }{d\,[z^{n-1}]\rW(z)},
\end{equation*}
where $[z^n]$ extracts the coefficient of $z^n$ in a formal power series. Both numerator and denominator can be computed explicitly by applying the Lagrange inversion formula~\cite[Theorem A.2]{FS09} on $\rB(z)$ via~\eqref{LagrangianEqB1}. The previous ratio is then equal to
\begin{equation*}
 	\frac{(d-2)n+1}{n-n_k(\rt)}\,\frac{d\,m_k(\rt)\,(d-1)^{n-n_k(\rt)}}{d\,(d-1)^{n-1}}\,\binom{(d-1)(n+m_k(\rt)-n_k(\rt))-1}{n-n_k(\rt)-1}\binom{(d-1)n}{n}^{-1}.
\end{equation*}
Thus, we obtain
\begin{equation}\label{eq:ProbaToInterpret}
	\P\big(B_k(\bm{T}_n) = B_k(\rt)\big) \underset{n\to\infty}{\longrightarrow} m_k(\rt)\,{\rho_d}^{n_k(\rt)}\,\frac{(d-1)(d-2)}{d}  \left(\frac{d-1}{d-2}\right)^{(d-1)\, m_k(\rt)}.
\end{equation}
The case where $k$ is even is similar and is left to the reader.

This completes the proof of local convergence of $(\sP^d_n)_n$ to a limiting probability measure $\sP^d_\infty$ supported on $\Tc$.
We can interpret $\sP^d_\infty$, via~\eqref{eq:ProbaToInterpret}, as the law of an explicit  well-chosen multitype Bienaymé-Galton-Watson distribution. This yields the following result, which concludes the proof of Theorem~\ref{theo:WeakConvergencedRegularTrees}. 
\begin{proposition}\label{prop:PropertiesOnInfiniteRandomTree}
Let $\bm{T_\infty}$ be a random tree sampled according to $\sP^d_\infty$. Then $\bm{T_\infty}$  almost surely has a unique spine and is well-charged. 

Moreover, each vertex has an offspring as described in Lemma~\ref{Cla:ChargeOffspringRegular}, and, conditioned on the offspring, the order of children and stems is uniform. 
\end{proposition}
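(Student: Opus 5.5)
The plan is to identify $\sP^d_\infty$ with an explicit \emph{Kesten-type} multitype Bienaymé–Galton–Watson tree conditioned to survive. Then read the three properties (unique spine, well-charged, uniform offspring order) off that construction, using the limit formula~\eqref{eq:ProbaToInterpret} to check that the two laws coincide on every ball $B_k$.

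\textbf{Step 1: define the critical tree.} Let $p=\rho_d\,(d-1)(1+\rB(\rho_d))^{d-2}$. At criticality, differentiating~\eqref{LagrangianEqB1} gives $1=\rho_d(d-1)^2(1+\rB(\rho_d))^{d-2}$ and $\rB(\rho_d)=1/(d-2)$. Hence $(1+\rB(\rho_d))=(d-1)/(d-2)$, $p=\rB(\rho_d)/(1+\rB(\rho_d))=1/(d-1)$, and each of the $d-1$ offspring slots of a white vertex is independently a black vertex with probability $1/(d-1)$, otherwise a closing stem.
\begin{itemize}
\item A white vertex therefore has a $\mathrm{Bin}(d-1,1/(d-1))$ number of black children, with mean $1$.
\item A non-root black vertex has exactly one white child, placed uniformly among its $d-1$ slots, the others being opening stems.
\end{itemize}
This two-type process is critical, and it is exactly the Boltzmann law on $\cB$ at $z=\rho_d$. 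The probability of a given planted tree with $b$ black vertices is $\rho_d^{\,b}/\rB(\rho_d)$.

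\textbf{Step 2: build the size-biased version $\widehat T$.} Start from the root black vertex, which has $d-1$ opening stems and one white child in a uniform position. Run a spine: each spine vertex gets the size-biased offspring law, and the next spine vertex is chosen uniformly among its black (resp.\ white) children. Off-spine vertices are independent copies of the critical tree of Step 1.

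The key computation is that for odd $k$,
\[
\P\big(B_k(\widehat T)=B_k(\rt)\big)=\text{(right-hand side of~\eqref{eq:ProbaToInterpret})}.
\]
The weight $\rho_d^{n_k(\rt)}$ comes from the Boltzmann weights inside the ball. The factor $m_k(\rt)$ comes from the choice of spine vertex at height $k$. The factor $\big((d-1)/(d-2)\big)^{(d-1)m_k}=\rW(\rho_d)^{m_k}$ is the normalisation of the $m_k$ hanging white subtrees, and the constant $(d-1)(d-2)/d$ is the root normalisation. The even case is checked the same way. Since balls determine the law, we get $\sP^d_\infty=\mathrm{Law}(\widehat T)$. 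This bookkeeping of constants is the main obstacle I expect. If the constants do not match directly, I will instead verify that the limits in~\eqref{eq:ProbaToInterpret} define a martingale in $k$ (the size-biasing identity) and identify the tree through its ball probabilities.

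\textbf{Step 3: read off the properties.} Unique spine: off-spine subtrees are critical BGW trees, hence a.s.\ finite, and there are countably many of them, so a.s.\ the spine is the only infinite path. The claim preceding Theorem~\ref{theo:WeakConvergencedRegularTrees} then gives a unique charge function. Offspring: by construction every vertex has offspring as in Lemma~\ref{Cla:ChargeOffspringRegular}, and the order is uniform given the offspring. For black vertices this is the uniform position of the white child. For white vertices, the i.i.d.\ slots and the uniform spine choice make every arrangement with a given number of black children equally likely.

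Well-chargedness: each finite off-spine subtree hanging at a white vertex lies in $\cW$ and has charge $d-1$, and each one hanging at a black vertex lies in $\cB$ and has charge $1$. Compute the charge of a spine vertex from its parent's relation $\c(v)=n_{\cstem}-n_{\ostem}+\sum\c(u_j)$, going down from $\c(\varnothing)=0$. Then $\c=1$ on non-root black spine vertices and $\c=d-1$ on white ones, by induction on height, which is exactly the finite bookkeeping of Lemma~\ref{Cla:ChargeOffspringRegular}. Thus the conditions $(C)$ hold everywhere. Alternatively, $(C)$ is a closed condition on each ball, since the charge of a vertex is determined by its finite subtree when off the spine, and is determined by ancestors for spine vertices. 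So it passes to the limit from the finite trees $\bm T_n$.
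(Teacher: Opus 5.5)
Your proposal is correct and follows the paper's own route. You identify $\sP^d_\infty$ with the size-biased (Kesten) multitype Bienaymé--Galton--Watson tree of the appendix by matching ball probabilities against~\eqref{eq:ProbaToInterpret}, and then read off the unique spine, well-chargedness and uniform ordering from that construction; your i.i.d.\ slots of parameter $1/(d-1)$ give exactly the paper's $\zeta^{(\circ)}$, the only cosmetic difference being that you treat stems as slot labels rather than as an infertile type. The constant bookkeeping you were worried about closes directly, with no martingale fallback needed: the critical tree started from the root is the Boltzmann law on $\T$ at $\rho_d$, and $1/M^{d,\brf}(\rho_d)=(d-1)(d-2)/d$ while $\rW(\rho_d)=\big((d-1)/(d-2)\big)^{d-1}$.
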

This proposition provides additional properties of $\bm{T_\infty}$ that are useful later. Its proof is deferred to Appendix~\ref{appA}, as it relies on the theory of multitype Bienaymé-Galton-Watson trees, which is not used elsewhere in this article.

\section{Local convergence of regular bipartite maps}\label{sec3}

In this section, we prove Theorem~\ref{theo:LocalConvergencedRegularMaps}. To this end, we first extend the closure operation $\Phi$ to $\S\coloneqq\mathrm{Supp}(\sP^d_\infty)$. We then show that this extension defines a continuous mapping, with respect to the local topology. Finally, we conclude by applying $\Phi$ to trees distributed according to $\sP^d_\infty$.

\subsection{Closure of infinite blossoming trees with one spine}
First, it is well known that infinite trees with a single spine have a unique embedding in the plane with a unique condensation point at $\infty$,  up to homeomorphism, see for example~\cite{ChassaingDurhuus06}. 

To extend the closure to $\cS^d$, we proceed as in the case of finite blossoming trees. We first define the matching of stems, and then we merge each matched pair into an edge. Figure~\ref{fig:InfiniteClosureRegular} illustrates the following constructions.

\paragraph*{Matching of stems} Let $T\in\cS^d$. We define a bi-infinite word $\bm{\omega}_T\coloneq\ldots \omega_{-2}\omega_{-1} \omega_{0} \omega_{1} \ldots$, with letters in $\{\cstem, \ostem\}$, as follows. Starting from the root, we perform a walk to the right along the tree and record the stems in the order in which they are encountered: for the $l$-th stem, we set $\omega_{l-1} = \cstem$ if it is a closing stem and $\omega_{l-1} = \ostem$ if it is an opening stem. We then proceed analogously to the left of the root, defining $\omega_{-l}$ for the $l$-th stem encountered. The resulting word $\bm{\omega}_T$ is called the \emph{contour word of stems} of $T$. 

The \emph{index} of the $l$-th stem encountered is defined to be  $l-1$ if the stem is encountered to the right of the root and  $-l$ if it is encountered to the left.

\begin{figure}[t!]
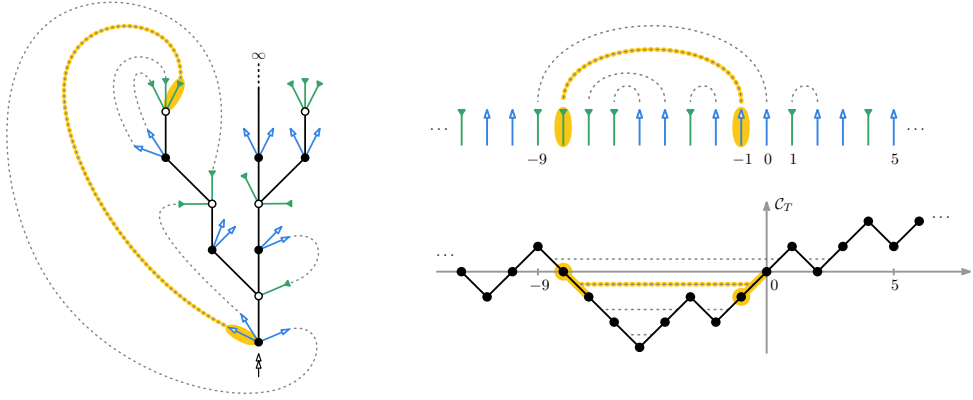

  	\centering
  	\includegraphics[width=0.35\linewidth ,page=2]{InfiniteClosure.pdf}\qquad
  	\includegraphics[width=0.55\linewidth ,page=3]{InfiniteClosure.pdf}
 	\caption{A tree in $\cS^d$ (left), its contour word of stems, and its associated contour walk of stems (right). In each figure, the same partial matching of stems is represented in dotted lines, and a pair of matched stems is highlighted.}\label{fig:InfiniteClosureRegular}
\end{figure}
\medskip

Next, we define the \emph{contour walk of stems} $\cC_T\!:\Z\rightarrow\Z$, as the mapping defined by $\cC_T(0) = 0$, and for all $k\in\Z$:
\begin{equation*}
	\cC_T(k+1)-\cC_T(k) \coloneqq 
	\begin{cases}
		-1 & \text{if } \omega_k=\cstem,\\
		+1 & \text{if }  \omega_k=\ostem.	
	\end{cases}		
\end{equation*}
In other words, for $k\geq 0$, $\cC_T(k)$ is equal to the number of opening stems minus the number of closing stems in $\omega_{0}\ldots\omega_{k-1}$, whereas for $k< 0$, it is equal to the number of closing stems minus the number of opening stems in $\omega_{-k}\ldots\omega_{-1}$.
\medskip

Finally, we extend the notion of matching of stems to the case of infinite trees. The following formalism makes precise the construction used in the finite case. The \emph{matching of stems} of $T$ is a mapping $\sigma_T\!:\Z\rightarrow\Z\sqcup\{-\infty,+\infty\}$, with no fixed point, defined as follows:
\begin{equation}\label{eq:DefSigma}
	\sigma(k) \coloneqq
		\begin{cases}
			\sup\left\{j<k :~ \omega_j=\cstem,~ \cC_T(j)=\cC_T(k)-1\right\} & \text{ if } \omega_k=\ostem,\\
			\inf\left\{j>k :~ \omega_j=\ostem,~ \cC_T(j)=\cC_T(k)+1\right\} & \text{ if } \omega_k=\cstem.
		\end{cases}
\end{equation}
where by convention $\inf\varnothing=+\infty$ and  $\sup\varnothing=-\infty$. It follows immediately that if $\sigma_T(k)=j\notin\{-\infty,+\infty\}$, then $\sigma_T(j)=k$ and $\{\omega_k,\omega_j\}=\{\cstem,\ostem\}$.
\medskip

\begin{proposition}\label{prop:PerfectMatching}
The mapping $\sigma_T$  defines a perfect matching between the closing and opening stems of $T$, that is, $\sigma_T$ is a bijection from $\Z$ to $\Z$. 
\end{proposition}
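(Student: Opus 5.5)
The plan is to reduce the statement to a property of the contour walk of stems $\cC_T$, and then to prove that property using the spine structure and the charge function of $T$. As noted right after~\eqref{eq:DefSigma}, whenever $\sigma_T(k)$ is finite, $\sigma_T(\sigma_T(k))=k$ and the two stems have opposite types. So $\sigma_T$ is an involution of $\Z$ without fixed points, and hence a bijection (a perfect matching), as soon as $\sigma_T(k)\notin\{-\infty,+\infty\}$ for every $k$. Because $\cC_T$ has $\pm1$ increments, this reduces to two conditions:
\begin{equation*}
\limsup_{k\to+\infty}\cC_T(k)=+\infty \quad\text{and}\quad \liminf_{k\to-\infty}\cC_T(k)=-\infty .
\end{equation*}
Indeed, take a closing stem at $k$. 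If the walk later exceeds the level $\cC_T(k)+1$, the first time after $k$ at which it sits at that level and then steps up is an index $j$ with $\omega_j=\ostem$ and $\cC_T(j)=\cC_T(k)+1$. This holds because each crossing of a level consists of an up-step followed eventually by the matching down-step, so the infimum in~\eqref{eq:DefSigma} is attained at a finite index. The case of an opening stem, going to the left, is symmetric, using a last-passage time. I would write out this discrete crossing argument carefully, checking the off-by-one conventions in~\eqref{eq:DefSigma}.

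To control $\cC_T$, I would use the spine $v_0=\varnothing_T,v_1,v_2,\dots$. Every subtree hanging off the spine is finite and well-charged. By Lemma~\ref{Cla:ChargeOffspringRegular}, transferred to $T$ via Proposition~\ref{prop:PropertiesOnInfiniteRandomTree}, such a subtree has charge $1$ if its root is black and $d-1$ if it is white. Moreover, spine vertices have charge $1$ (black, non-root) or $d-1$ (white). Let $k_h$ be the index reached on the right when the contour first passes the spine edge $v_hv_{h+1}$, and define $k_{-h}$ symmetrically on the left. Then $\cC_T(k_h)=-R_h$, where $R_h$ is the total charge (closing minus opening stems) of the stems and hanging subtrees lying to the right of the spine between heights $0$ and $h$; define $L_h$ similarly on the left. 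The charge recursion along the spine gives $L_h+R_h=\c(v_0)-\c(v_{h+1})$, which is bounded by $d-1$. Inside each finite hanging subtree the walk only makes bounded excursions, so it suffices to show that $(R_h)$ is unbounded below and $(L_h)$ is unbounded above. Given the bounded sum, these two conditions reduce to the single statement that $(R_h)_h$ oscillates, meaning $\liminf R_h=-\infty$ and $\limsup R_h=+\infty$.

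This oscillation is the main obstacle. It is not true for every infinite tree with one spine (for instance, a tree whose spine children are always the leftmost child), so I would prove it almost surely under $\sP^d_\infty$, and then take $\cS^d$ to be the full-measure set of trees where it holds. Alternatively, I would check that it holds on the topological support, if the paper's $\cS^d$ is meant literally as the support. By Proposition~\ref{prop:PropertiesOnInfiniteRandomTree} and the multitype Bienaymé--Galton--Watson description from Appendix~\ref{appA}, the increments $R_{h+2}-R_h$, grouped over one black/white pair of spine levels, are i.i.d. This uses that the order of children and stems is uniform given the offspring, independently along the spine, and that the hanging subtrees are independent. The increments are non-degenerate, since the spine child's position is uniform, so the split of charge between left and right genuinely varies. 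Left/right symmetry of the law gives $R_h\overset{d}{=}L_h$, and combined with $L_h+R_h=O(1)$ this forces the increments to have mean zero. They are also integrable, since they are bounded below and have the same law as minus a bounded-below variable up to $O(1)$. A non-degenerate mean-zero random walk oscillates almost surely by the Chung--Fuchs theorem, which gives the two conditions above and concludes the proof.
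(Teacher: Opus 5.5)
Your argument is correct and follows the same skeleton as the paper's proof of Lemma~\ref{pro:ClosureIsWellDefined}. Both group the stems on one side of the spine by spine height over black/white pairs of levels, read the contour walk of stems at the corresponding times as a random walk with i.i.d.\ bounded increments, and conclude by recurrence.

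The real difference is how you get centring. The paper computes the increment law explicitly as $\mathrm{Unif}\{0,\ldots,d-2\}-\mathrm{Unif}\{0,\ldots,d-2\}$. You instead use reflection invariance of $\sP^d_\infty$ together with the conservation identity $L_h+R_h=-\c(v_{h+1})$. Your route needs no computation and should carry over when offspring laws are not explicit. In fact, over a full black/white period the left and right increments are exact negatives of each other, so reflection invariance already makes each increment symmetric in law, which is the symmetry visible in the paper's formula. Your identity also shows that both conditions of Lemma~\ref{pro:ClosureIsWellDefined} follow from the single fact $\inf_h R_h=-\infty$, so the paper's appeal to symmetry for the left side is not even needed.

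Two corrections.

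First, the left-hand condition in your reduction, $\liminf_{k\to-\infty}\cC_T(k)=-\infty$, does not suffice. It holds when every stem to the left of the root is an opening stem, and then none of them can be matched, since opening stems are matched to closing stems on their left. Read $\sigma_T$ as the nested involutive matching; for this, the levels in \eqref{eq:DefSigma} should be $\cC_T(k)+1$ for an opening stem and $\cC_T(k)-1$ for a closing stem, and as printed they are swapped. With that reading, what is needed on the left is $\sup_{k<0}\cC_T(k)=+\infty$, as in the paper. This is what your later requirement that $L_h$ be unbounded above actually expresses. Since you prove that $R_h$, hence $L_h$, oscillates, your conclusion is unaffected.

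Second, you are right that the statement only holds almost surely. However, your fallback of checking it on the topological support cannot work. Your own example, with the spine child always in extreme position, has every ball of positive $\sP^d_\infty$-probability by \eqref{eq:ProbaToInterpret}. So it belongs to $\mathrm{Supp}(\sP^d_\infty)$, yet it carries only finitely many stems on one side of the spine. Hence $\cS^d$ must be taken to be a full-measure set, and the paper's ``for every $T\in\cS^d$'' should be read in that sense.
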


This follows directly from the lemma below, whose proof is deferred to Section~\ref{sec:ProofLemma}.

\begin{lemma}\label{pro:ClosureIsWellDefined}
For every $T\in\S$, one has
\begin{equation*}
	 \sup_{k\geq 0}\cC_{T}(k)= \sup_{k< 0}\cC_{T}(k)=+\infty,
\end{equation*}
\end{lemma}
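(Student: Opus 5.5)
The idea is to reduce both suprema to a single one-dimensional walk indexed by the spine, and then use the description of $\bm{T_\infty}$ in Proposition~\ref{prop:PropertiesOnInfiniteRandomTree}. Let $T\in\S$, and let $v_0=\varnothing_T,v_1,v_2,\ldots$ be its unique spine; it alternates black/white starting from black. Every stem of $T$ is either incident to some $v_i$ or lies in a finite planted subtree hanging off some $v_i$, to the right or to the left of the spine. Walking to the right from the root, we traverse completely, in order, everything hanging to the right of $v_0$, then of $v_1$, and so on. Let $\tau_i$ be the index reached just after all stems hanging to the right of $v_0,\ldots,v_i$ have been read. It suffices to show $\sup_i \cC_T(\tau_i)=+\infty$, and similarly on the left; intermediate values inside the subtrees are irrelevant for a lower bound on a supremum.

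\textbf{Increments of the right walk.} A finite hanging planted subtree of charge $c$ changes $\cC_T$ by $-c$ once fully traversed, since $\cC_T$ counts opening minus closing stems. By Lemma~\ref{Cla:ChargeOffspringRegular}, which holds for $T$ by Proposition~\ref{prop:PropertiesOnInfiniteRandomTree}, a black subtree hanging off a white spine vertex has charge $1$, so it acts exactly like a single closing stem. Hence
\begin{equation*}
R_i\coloneqq\cC_T(\tau_i)=\sum_{j\le i,\ v_j\text{ black}} a_j-\sum_{j\le i,\ v_j\text{ white}} b_j,
\end{equation*}
where $a_j$ is the number of opening stems of $v_j$ to the right of the spine, and $b_j$ is the number of offspring items (closing stems or black children) of $v_j$ to the right of the spine.

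\textbf{Left side.} Let $L_i$ be the analogous value on the left, with the sign convention of $\cC_T$ for negative indices (closing minus opening). Consider a consecutive black/white pair $(v_{2m},v_{2m+1})$ with $m\ge1$. Here $v_{2m}$ carries $d-2$ opening stems and $v_{2m+1}$ carries $d-2$ non-spine items, each of charge $1$. The total (left plus right) contribution of the pair, in opening-minus-closing terms, is therefore $(d-2)-(d-2)=0$; the root only adds a bounded constant. Consequently the opening-minus-closing count on the left equals $-R_i$ up to a bounded error. With the sign convention for negative indices, the left walk at its checkpoints is therefore $R_i+O(1)$. So both suprema in Lemma~\ref{pro:ClosureIsWellDefined} are infinite as soon as $\sup_i R_i=+\infty$. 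This deterministic reduction is the clean part.

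\textbf{Main obstacle: showing $\sup_i R_i=+\infty$.} This is where the randomness must enter. Nothing in the well-charged condition forces it: the spine charges are forced to alternate $0,d-1,1,d-1,1,\ldots$ regardless of the left/right placement. I would use Proposition~\ref{prop:PropertiesOnInfiniteRandomTree}: conditionally on the offspring types, the order is uniform, so the position of the spine child among the $d-1$ offspring slots is uniform, independently along the spine (this independence should come from the multitype Bienaymé--Galton--Watson description in Appendix~\ref{appA}). Then $a_j$ and $b_j$ are i.i.d.\ uniform on $\{0,\ldots,d-2\}$, with the same mean $(d-2)/2$. So $(R_{2m+1})_m$ is a random walk with i.i.d.\ centered increments $a_{2m}-b_{2m+1}$, which are non-degenerate since $d\ge3$. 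By recurrence (Chung--Fuchs) it satisfies $\limsup R_i=+\infty$ almost surely.

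The delicate point is the quantifier ``for every $T\in\S$''. Trees in the topological support could place all spine children at extreme positions, making $R_i$ bounded. I would therefore either check that the paper's $\S$ is meant up to a null set, or redefine $\S$ as the full-measure set of trees for which the above random walk oscillates. Since $\S$ is only used to apply $\Phi$ to $\sP^d_\infty$-distributed trees, this restriction is harmless.
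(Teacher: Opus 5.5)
Your argument is correct and is essentially the paper's: group the stems to the right of the spine by spine height, treat each charge-$1$ black subtree hanging off a white spine vertex as a single closing stem, and conclude from the recurrence of a walk with i.i.d.\ $\mathrm{Unif}\{0,\ldots,d-2\}-\mathrm{Unif}\{0,\ldots,d-2\}$ increments. Where you differ, you improve on it. Your charge-conservation identity for the left side (the left walk at checkpoints equals $R_i-\c(v_{i+1})$) is a cleaner substitute for the paper's ``by symmetry''. Your quantifier caveat is also justified: the paper's proof likewise only treats $\sP^d_\infty$-almost every $T$, and a tree with no stem to the right of its spine lies in the topological support $\S$ (all its balls have positive probability) yet has $\sup_{k\geq 0}\cC_T(k)$ finite, so $\S$ must indeed be read as a full-measure subset.
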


\paragraph*{Closure operation} 
We extend the closure operation to trees in $\cS^d$. Let $T\in\cS^d$. We define its closure $\Phi(T)$ as follows: starting from the root, we perform a walk along the tree to the right. Whenever an opening stem is encountered, say the stem with index $l$, we merge it with the stem with index $\sigma_T(l)$ into a single edge. The edge is drawn so that it does not cross any edge, vertex, or stem, and such that if it were oriented from the opening stem to the closing stem, then the outer infinite face would lie on its left. 

After this merging step, at most two faces have incident stems, one of which is the infinite face. On the finite face, the number of opening and closing stems coincide and is finite. We perform the same merging procedure there. 

Once these mergings have been completed, we continue the walk around the tree until the next opening stem, and we repeat the procedure. Figure~\ref{fig:InfiniteClosureRegular} illustrates the  first steps of the closure procedure. 

By construction, this leaves us with an infinite map $M=\Phi(T)$, meaning that it admits an embedding in the plane such that every bounded subset of the plane intersects only finitely many edges and vertices. This can be proved similarly to the argument described in~\cite{Stephenson18}. In addition, $M$ is one-ended since $T$ has a single spine.

Moreover, we can identify the corners of $T$ with those of $M$, and we canonically root $M$ at the root corner of $T$. Note that, contrary to the finite case, $M$ has no marked face. 

Finally, note that no unmatched stems remain in $M$ by Proposition~\ref{prop:PerfectMatching}. Thus, the closure of $M$ belongs to $\Mc$.

\subsection{Continuity of the closure on the support of \texorpdfstring{$\sP^d_\infty$}{Pdinf}}

The purpose of this section is to prove the following Theorem:

\begin{theorem}\label{theo:ContinuityPhi}
	The extended closure operation $\Phi$ defines a continuous mapping from $\S$ to $\Mc$, with respect to the local topology.
\end{theorem}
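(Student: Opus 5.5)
To prove continuity at a fixed $T\in\S$, fix $R\geq 0$. We will produce $K\geq 0$ such that every $T'\in\S$ with $B_K(T')=B_K(T)$ satisfies $B_R(\Phi(T'))=B_R(\Phi(T))$. The idea is that the ball $B_R(\Phi(T))$ is determined by a finite portion of $T$ together with the matching $\sigma_T$ restricted to finitely many stems. That finite amount of information is already visible in a large enough ball of the tree.

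\textbf{Step 1: a finite window of the tree determines $B_R(\Phi(T))$.} Every vertex of $\Phi(T)$ is a vertex of $T$, and every edge is either a tree edge or a matched pair of stems. Since $\Phi(T)$ is locally finite, $B_R(\Phi(T))$ is finite. Let $V_R$ be the finite set of its vertices and $I_R\subset\Z$ the finite set of indices of stems incident to vertices in $V_R$. The ball, together with its planar embedding and root corner, is determined by three pieces of data:
\begin{itemize}
\item the subtree of $T$ spanned by $V_R$, with its stems;
\item the values $\sigma_T(l)$ for $l\in I_R$;
\item the cyclic order of corners around each vertex of $V_R$, which is inherited from $T$ together with the positions of the stems.
\end{itemize}
This holds because the closure draws each new edge in a canonical way: it never crosses anything, and the infinite face lies on its left. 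Consequently the rotation system of $\Phi(T)$ restricted to $V_R$ depends only on these data.

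\textbf{Step 2: the matching is determined by a finite part of the contour word.} For $l\in I_R$, Proposition~\ref{prop:PerfectMatching} gives $\sigma_T(l)\in\Z$. By \eqref{eq:DefSigma}, $\sigma_T(l)$ depends only on the letters $\omega_j$ with $j$ between $l$ and $\sigma_T(l)$. Set $L=\max_{l\in I_R}(|l|+|\sigma_T(l)|)$. Then the restriction of $\sigma_T$ to $I_R$ is a function of the finite window $\omega_{-L-1}\ldots\omega_{L}$, and the same function of the corresponding window of $T'$ gives $\sigma_{T'}$ on $I_R$.

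\textbf{Step 3 (main obstacle): control of contour indices by a ball of the tree.} It remains to show that if $K$ is large, then $B_K(T')=B_K(T)$ forces the first $L+1$ stems to the right of the root and the first $L+1$ stems to the left to coincide in $T$ and $T'$, with the same indices. This step is delicate because in a one-spine tree the contour walk to the right eventually goes down the spine. The stems read at contour index up to $L$ could a priori lie arbitrarily deep, so we must rule out that $B_K(T)$ hides some of them. I would argue as follows:
\begin{itemize}
\item The contour to the right visits, in order, the finite subtrees hanging to the right of the spine, getting deeper along the spine.
\item Since $T$ is $d$-regular and every finite branch is finite, the first $L+1$ stems to the right all lie within some finite height $h_+$. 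The same holds to the left with some height $h_-$.
\item Take $K>\max(h_+,h_-,\max_{v\in V_R}\mathrm{ht}(v))+1$, where the $+1$ ensures that the offspring of height-$(K-1)$ vertices is seen.
\item The difficulty is that $B_K(T')$ might contain extra spine-side branches of $T'$ before the relevant stems. This cannot happen: the contour of $T'$ up to the $(L+1)$-st stem only explores vertices that precede, in contour order, a stem at height at most $h_+$, and all of these lie in $B_K(T')=B_K(T)$. Hence the two prefixes of the contour words agree.
\end{itemize}
In particular, the ball $B_K(T')=B_K(T)$ together with the requirement that $T'$ be in $\S$ (one spine) yields the same indices. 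Lemma~\ref{pro:ClosureIsWellDefined} is used only through Proposition~\ref{prop:PerfectMatching} to guarantee finiteness of $L$.

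\textbf{Step 4: conclusion.} Combining Steps 1–3, $\Phi(T')$ and $\Phi(T)$ share the same vertices in $V_R$, the same edges incident to them, and the same rotation system around them. Therefore $B_R(\Phi(T'))=B_R(\Phi(T))$, and $\dloc(\Phi(T),\Phi(T'))\leq (1+R)^{-1}$. Since $R$ is arbitrary, $\Phi$ is continuous at $T$.
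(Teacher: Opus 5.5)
Your proposal is correct and is essentially the paper's argument. Your bound $L$ and your heights $h_\pm$ play the roles of the paper's matching-closed window $[K^-,K^+]$ (built from the level $x$ of $\cC_T$) and of the radius $R'$ obtained via spine heights; your Step~2 observation, that $\sigma_T(l)$ depends only on the letters between $l$ and $\sigma_T(l)$, is a slightly more direct substitute for the closed window, and your Step~3 spells out the transfer to $T'$ that the paper leaves implicit.

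Step~3 should be tightened, because the appeal to ``the contour of $T'$'' is slightly circular. The contour exploration is determined step by step by the rotation at the current vertex. So choosing $K$ larger than one plus the maximal height of the finitely many vertices visited by the contours of $T$ before their $(L+1)$-st stems on each side forces the contours of $T'$ to agree, and this needs no appeal to the spine of $T'$.
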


Before proving this result, we first introduce key definitions. For any tree $T\in\S$, we denote its \emph{set of stems} by $\rS(T)$.
Moreover, we define the \emph{spine height} of each stem  $s\in\rS(T)$, denoted by $h_{\mathrm{spine}}(s)$, as follows: let $v$ denote the unique adjacent vertex of $s$, and let $\varnothing_{{T}}=v_0,\ldots,v_k=v$ be the unique path from the root vertex of ${T}$ to $v$. We set $h_{\mathrm{spine}}(s)$ as the largest index $j$ such that $v_j$ lies on the spine of ${T}$.

\begin{figure}[t!]
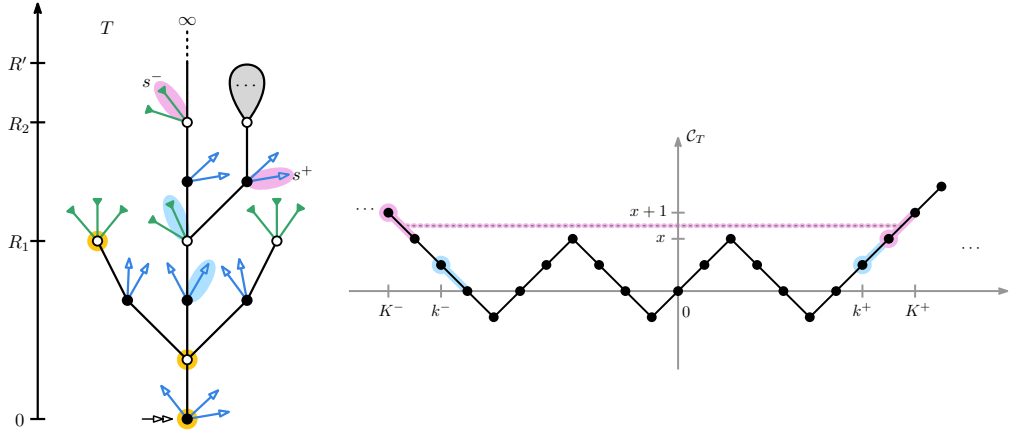

  	\centering
  	\includegraphics[width=0.3\linewidth ,page=6]{InfiniteClosure.pdf}\quad
  	\includegraphics[width=0.65\linewidth ,page=7]{InfiniteClosure.pdf}
 	\caption{A tree  $T$ in $\S$ (left) and its associated contour walk of stems $\cC_T$ (right). Some stems of interest are colored in blue and red, in $T$ and $\cC_T$. The vertices of $T$ belonging to $B_1(\Phi(T))$ are highlighted in yellow.}\label{fig:ClosureContinuous}
\end{figure}

\begin{proof}[Proof of Theorem~\ref{theo:ContinuityPhi}]
	Let $T\in\cS^d$ and let $R>0$. Recall the definition of the local distance, see~\eqref{eq:DefLocDist}. We show that there exists $R'>0$ such that for all $T'\in\cS^d$, if $B_{R'}(T)=B_{R'}(T')$, then 
\begin{equation}
	B_{R}(\Phi(T))=B_{R}(\Phi(T')).
\end{equation}
If $T$ is finite, then this statement holds by taking any $R'>0$ such that $B_{R'}(T)=T$. Thus, we may assume that $T$ is infinite.
	
	We adapt the approach of~\cite{Stephenson18}. The subsequent constructions are illustrated in Figure~\ref{fig:ClosureContinuous}. First, choose $R_1>0$ sufficiently large so that all vertices in $B_R(\Phi(T))$ are contained in $B_{R_1}(T)$, i.e.
\begin{equation*}
	\rV\big(B_{R}(\Phi(T))\big)\subset \rV\big(B_{R_1}(T)\big).
\end{equation*}
The existence of $R_1$ follows from the fact that each vertex of $T$ has degree $d$. Next, observe that there are only finitely many stems in $T$ incident to the vertices in $B_{R_1}(T)$, so that we can denote by $k^-$ the minimal index among these stems, and by $k^+\geq k^-$ the maximal index. Set $x = \max_{k^-\leq k\leq k^+}{\cC_T(k)}$ and define 
\begin{align*}
	K^- &= \sup\{k<k^-:~ \cC_T(k)>x\},\\
	K^+ &= \inf\{k>k^+:~ \cC_T(k)\geq x\}.
\end{align*} 	
Both $K^-$ and $K^+$ are finite by Proposition~\ref{pro:ClosureIsWellDefined}. By construction, $K^-$ is the index of a closing stem $s^-$, and $K^+$ is the index of an opening stem $s^+$. 

Now, for any $s\in\rS(T)$ with index $k\in[K^-,K^+]$, we have $K^- \leq \sigma_T(k)\leq K^+$ by construction. Hence, no stem with index in $[K^-,K^+]$ is matched to a stem with index in $\Z\setminus [K^-,K^+]$. Let 
\begin{equation}
	R_2=\max(h_{\mathrm{spine}}(s^-),h_{\mathrm{spine}}(s^+)),
\end{equation}
so that no stem with spine height greater than $R_2+1$ is matched to a stem incident to a vertex in $B_{R_1}(T)$. Finally, take
\begin{equation*}
	R'= \max\{h(s) : s\in\rS(T), h_{\mathrm{spine}}(s)\leq R_2\},
\end{equation*} 
where $h(s)$ stands for the height of stem $s$. This yields the following inclusion, which completes the proof: 
\begin{equation*}
	B_{R}(\Phi(T)) \subset \Phi(B_{R'}(T)).\qedhere
\end{equation*} 
\end{proof}

\subsection{Proof of Theorem~\ref{theo:LocalConvergencedRegularMaps}}

We can now prove the main theorem of this paper. Let $\bm{T_\infty}$  be a random tree distributed according to $\sP^d_\infty$, and let $\bm{M_\infty}\coloneqq\Phi({\bm{T_\infty}})$. By Theorem~\ref{theo:ContinuityPhi},  $\bm{M_\infty}$ almost surely belongs to $\Mc$. Our goal is to prove that its law corresponds to the weak limit of the sequence $\left(\P^d_n\right)_n$. 
\medskip

Let $\bm{T}_n$ be a random tree distributed according to $\sP^d_n$, and let  $(\bm{M}_n,\bm{F}_n)=\Phi(\bm{T}_n)$ denote its associated plane map. First, $(\bm{M}_n,\bm{F}_n)$ is uniformly distributed on $\mathcal{M}_n^{d,\brf}$, since $\Phi$ is a one-to-one mapping on this set by Theorem~\ref{theo:BijMapsTrees}. Secondly, any $d$-regular bipartite maps with $n$ vertices share the same number of faces, see~\eqref{eq:NbPlaneMaps}, and hence $\bm{M}_n$ is  uniformly distributed on $\mathcal{M}_n^{d}$. 

We conclude, by applying Theorems~\ref{theo:WeakConvergencedRegularTrees} and~\ref{theo:ContinuityPhi}, which yield the following equalities and convergence in distribution:
\begin{equation*}
	\bm{M}_n = \Phi(\bm{T}_n) \underset{n\to\infty}{\rightarrow} \Phi(\bm{T_\infty}) = \bm{M_\infty},
\end{equation*}
where, by an abuse of notation, $\Phi(\bm{T}_n)$ means that we apply $\Phi$ to $\bm{T}_n$, after which the marked face in the resulting map is ignored.

\subsection{Proof of Lemma~\ref{pro:ClosureIsWellDefined}}\label{sec:ProofLemma}

\begin{figure}[t!]
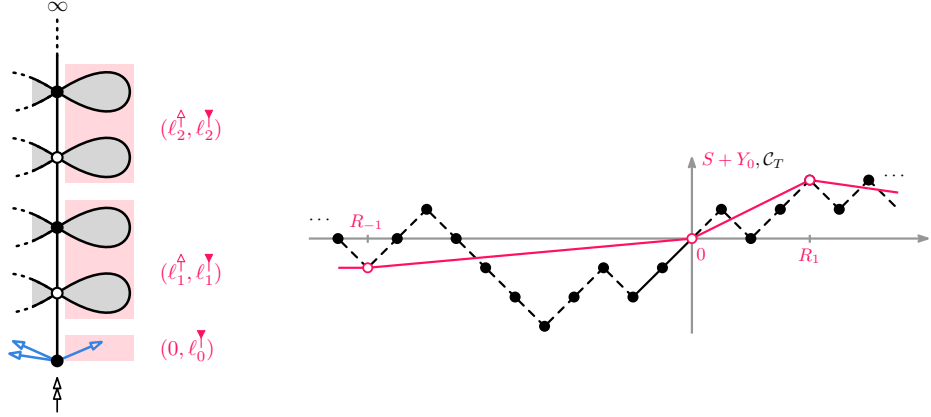

  	\centering
  	\includegraphics[width=0.25\linewidth ,page=4]{InfiniteClosure.pdf}\qquad
  	\includegraphics[width=0.6\linewidth ,page=5]{InfiniteClosure.pdf}
 	\caption{A tree $T$ in $\S$ (left) and the sub-walk of interest of $\cC_T$  (right). }\label{fig:ReducedWalkRegular}
\end{figure}

Let $\bm{T_\infty}$ be a random tree distributed according to $\sP^d_\infty$. To prove the statement, we construct a suitable subsequence of $(\cC_{\bm{T_\infty}}(k))_k$ which is almost surely recurrent, both for $k\in\Z_{\geq 0}$ and $k\in\Z_{< 0}$. We focus on $k\in\Z_{\geq 0}$; the other case follows by symmetry. The subsequent constructions are illustrated in Figure~\ref{fig:ReducedWalkRegular}.
\medskip

First, as $\bm{T_\infty}$ almost surely has  a single spine, $\rS(\bm{T_\infty})$ can be partitioned into four sets as follows
\begin{equation}
	\rS(\bm{T_\infty}) = \rS^{\cstem}_-(\bm{T_\infty})\sqcup\rS^{\ostem}_-(\bm{T_\infty})\sqcup\rS^{\cstem}_+(\bm{T_\infty})\sqcup\rS^{\ostem}_+(\bm{T_\infty}),
\end{equation}
where $\rS^{\cstem}_-(\bm{T_\infty})$ and  $\rS^{\ostem}_-(\bm{T_\infty})$ consists of the closing and opening stems, respectively, lying on the left of the spine, and $\rS^{\cstem}_+(\bm{T_\infty})$ and  $\rS^{\ostem}_+(\bm{T_\infty})$ consist of the closing and opening stems, respectively, lying on the right of the spine. Then, for every $k\in\Z_{\geq 0}$, define
\begin{align*}
	\ell^{\cstem}_k \coloneqq \mathrm{Card}\{s\in\rS^{\cstem}_+(\bm{T_\infty}): ~2k-1\leq h_{\mathrm{spine}}(s)\leq 2k\},\\
\ell^{\ostem}_k \coloneqq \mathrm{Card}\{s\in\rS^{\ostem}_+(\bm{T_\infty}): ~2k-1\leq h_{\mathrm{spine}}(s)\leq 2k\}.	
\end{align*}
By the branching property of $\sP^d_\infty$, the pairs $(\ell^{\cstem}_k,\ell^{\ostem}_k)$ are independent. For each $k\in\Z_{\geq 0}$, define
\begin{equation*}
	X_k = \ell^{\cstem}_k + \ell^{\ostem}_k\quad\text{and}\quad Y_k = \ell^{\cstem}_k - \ell^{\ostem}_k.
\end{equation*}
Observe first that $X_k$ is almost surely non-negative and has a positive probability of being positive. Hence, the associated walk $R_n=\sum_{k=0}^{n}X_k$ is non-decreasing and diverges to infinity almost surely.

Now, we determine the distribution of $Y_k$. We prove that $Y_0$ is uniformly distributed on $\{0,\ldots,d-1\}$ and that, for all $k\geq 1$, 
\begin{equation}\label{eq:DistributionXkRegular}
	Y_k \overset{(d)}{=} \mathrm{Unif}\{0,\ldots,d-2\}-\mathrm{Unif}\{0,\ldots,d-2\},
\end{equation}
where the two uniform distributions are independent. The following arguments rely on the possible offspring of black and white vertices in $\bm{T_\infty}$. There are described in Proposition~\ref{prop:PropertiesOnInfiniteRandomTree}.

Fix $k\geq 1$. The black vertex at height $2k$ on the spine has a unique white child, which lies on the spine, and $d-2$ incident opening stems. The ordering is chosen uniformly among the $d-1$ configurations. Thus, the number of opening stems to the right of the spine with spine height $2k$ is uniformly distributed on $\{0,\ldots,d-2\}$.  This yields the term $-\mathrm{Unif}\{0,\ldots,d-2\}$ in~\eqref{eq:DistributionXkRegular}.
\medskip

Consider now the white vertex $v$ at height $2k-1$. It has one black child, which lies on the spine, together with a collection of $d-2$ elements which are either closing stems or black vertices of charge $1$. The ordering is again  uniform. Thus, the number of stems and black children of $v$ lying to the right of the spine is uniformly distributed on $\{0,\ldots,d-2\}$. Denote this number by $p$, so that their total contribution to $Y_k$ is $p$, since each of them contributes by $1$ to $Y_k$. This yields the term $+\mathrm{Unif}\{0,\ldots,d-2\}$ in~\eqref{eq:DistributionXkRegular}.
\medskip

We conclude that $S_n=\sum_{k=1}^{n}Y_k$ is a symmetric random walk on $\Z$ with no drift and independent steps. Thus, it is recurrent. Finally, for all $n\geq 1$,
\begin{equation*}
	\cC_{\bm{T_\infty}}(R_n) = S_n+Y_0. 
\end{equation*}
This identity follows directly from grouping together the stems lying to the right of the spine according to their spine height. This completes the proof.

\section{Perspectives for bipartite maps with general degree distribution and the Ising model}\label{sec4}

In this section, we outline several ongoing research directions concerning the local limits of random bipartite maps with general degree distributions and random maps endowed with an Ising configuration.

\paragraph*{Local limit of bipartite maps with general degree distribution}
First, we expect that Theorem~\ref{theo:LocalConvergencedRegularMaps} to also hold for bipartite planar maps with bounded degrees and with a prescribed degree distribution. More precisely, let $\mathcal{M}_n$ be the set of all bipartite planar maps with $n$ vertices and let $q_\bullet,q_\circ:\Z_{\geq 0}\rightarrow\R^+$ be two weight functions. Let $\bm{M}_n$ be a random bipartite map sampled from $\mathcal{M}_n$ with probability proportional to its weight, defined by
\begin{equation*}
	\prod_{v\in\rV_{\!\bullet}(\rm)}q_\bullet({\deg(v)})\prod_{v\in\rV_{\!\circ}(\rm)}q_\circ({\deg(v)}).
\end{equation*}

In the case of bounded degrees, that is, when $q_\bullet$ and $q_\circ$ are nonzero only for a finite set of indices, we expect $\mathcal{M}_n$ to converge in distribution, for the local topology, to a random one-ended bipartite planar map whose distribution depends on $q_\bullet$ and $q_\circ$. 

The proof in this setting should be similar in spirit to the one provided here. Indeed, it would rely on the same blossoming bijection and the argument would proceed in the same steps. First, we establish the  local convergence of  the corresponding family of blossoming trees. Second, we extend the bijection to infinite trees, allowing the local convergence to be transferred to maps. However, the argument would be more elaborate than in the present work, since there are no explicit enumerative formulas in this broader setting. Instead, we could rely on the Drmota-Lalley-Wood Theorem to study the generating functions of blossoming trees analytically, together with the general results of Stephenson~\cite{Stephenson18} regarding the local limit of multitype Bienaymé-Galton-Watson trees.  

A natural further step would be to extend this result to the case of unbounded degrees. However, this would require additional theoretical results, as the family of trees arising from the bijection is more complex. Nevertheless, we expect a similar local convergence and a corresponding limiting distribution.

\paragraph*{Local limit of maps endowed with Ising configuration }
As a continuation of the previous perspective, we also expect to derive the local convergence of planar maps decorated with an Ising configuration from the local convergence of bipartite plane maps with vertex degree $2$ and $d$. Let us be more precise. 
A \emph{spin configuration} on a map $\rm$ is a mapping $\sigma:\rV(\rm)\rightarrow\{-1,+1\}$. An edge $e\in\rE(\rm)$ is said to be \emph{monochromatic} if its two endpoints are mapped to the same value. The Ising model on planar maps consists in studying random finite planar maps endowed with a spin configuration, where the probability of sampling a map $(\rm,\sigma)$ is proportional to $\nu^{m(\rm,\sigma)}$, where $m(\rm,\sigma)$ denotes the number of monochromatic edges of $\rm$, and where $\nu$ is a fixed non-negative real number (representing the inverse temperature of the model). 
\medskip

A well-known method for studying maps endowed with a spin configuration relies on a connection with bipartite maps. Consider a map endowed with a spin configuration $(\rm,\sigma)$, and for each vertex $v\in\rV(\rm)$, color $v$ in black if $\sigma(v)=-1$ and in white if $\sigma(v)=+1$. Note that, at this stage, $\rm$ may not be bipartite in terms of this vertex-coloring. Then replace each edge of $\rm$ by a chain of arbitrary length consisting of black and white vertices of degree $2$, so that the resulting map becomes bipartite. Necessarily, a monochromatic edge is replaced by a chain of odd length, whereas a non-monochromatic is replaced by a chain of even length. This transformation allows to get a functional equation between the generating functions of both models, see~\cite{AMT25,BousquetMelouSchaeffer03}. Observe that if $\rm$ is $d$-regular, then the bipartite maps obtained in this manner have vertex degree exactly $2$ and $d$. 
\medskip

We expect that this transformation could lead to a proof of local convergence of $d$-regular maps endowed with an Ising configuration, at least in the regime $\nu\leq 1$, which corresponds to the anti-ferromagnetic Ising model. 

\newpage
\bibliography{lipics-v2021-sample-article}

\newpage
\appendix
\section{Multitype Bienaymé-Galton-Watson and proof of Proposition~\ref{prop:PropertiesOnInfiniteRandomTree}}\label{appA}

The purpose of this appendix is to prove Proposition~\ref{prop:PropertiesOnInfiniteRandomTree}, whose proof relies on the theory of multitype Bienaymé-Galton-Watson trees. 
We begin by recalling useful definitions concerning multitype trees and random multitype Bienaymé-Galton–Watson trees; see~\cite[Section 2.1]{Stephenson18} for definitions with the Ulam formalism.

\subsection{Definitions and notation}\label{sec:DefinitionsMultitype}

\paragraph*{Multitype trees}
Let $\cK$ be a finite set. A \emph{$\cK$-type tree} is a pair $(\rt,\kappa)$ consisting of a plane tree  $\rt$ (possibly infinite), either rooted or planted, together with a function $\kappa:\rV(\rt)\rightarrow \cK$. The value $\kappa(v)$ is called the \emph{type} of the vertex $v$. When the context is clear, we simply write $\rt$ instead of $(\rt,\kappa)$. The root of such a tree $\rt$ is denoted by $\varnothing_\rt$. We denote by $\cT^{\cK,(i)}$ the set of all finite $\cK$-type trees $\rt$ for which $\varnothing_\rt$ has type $i\in\cK$.

For any $\cK$-type tree $\rt$ and $v\in\rV(\rt)$, the \emph{list of types of the ordered offspring} of $v$ in $\rt$ is defined by
\begin{equation*}
\sw_\rt(v)\coloneqq\left(\kappa(u_1),\ldots,\kappa(u_l)\right),
\end{equation*}
where $\left(u_1,\ldots,u_l\right)$ denotes the ordered offspring of $v$. 

Finally, we denote by $\sW_\cK\coloneqq\cup_{n\geq 0}{\cK^n}$ the set of all finite lists of types. For $\sw\in\sW_\cK$, denote its size by $\vert\sw\vert$ and for any $i\in\cK$ set:
\begin{equation*}
	\vert\sw\vert_i \coloneqq \big\vert\left\{1\leq j\leq \vert\sw\vert  : \sw_j=i\right\}\big\vert.
\end{equation*}

\paragraph*{Multitype Bienaymé-Galton–Watson distribution}
First, an \emph{ordered offspring distribution} is a sequence $\bm{\zeta}=\left(\zeta^{(i)}\right)_{i\in\cK}$, where ${\zeta}^{(i)}$ is a probability measure on $\sW_\cK$.  We then define the distribution of a \emph{$\cK$-type Galton–Watson tree} rooted at a vertex of type $i\in\cK$ and with ordered offspring distribution $\bm{\zeta}$ as the following measure:
\begin{equation}
	\mathbb{P}_{\bm{\zeta}}^{(i)}\!\left(\rt,\kappa\right) \coloneqq \1_{\{\kappa(\varnothing_\rt)=i\}}\prod\limits_{v\in\rV(\rt)}{\zeta^{(\kappa(v))}{\big(\sw_\rt(v)\big)}},
\end{equation}
for any finite tree $(\rt,\kappa)\in\cT^{\cK,(i)}$. 
Note that $\mathbb{P}_{\bm{\zeta}}^{(i)}$ is, in general, only a sub-probability. It becomes a probability measure in the critical case, which is defined below. In that case, a tree sampled from this distribution is referred to as a \emph{$\bm{\zeta}$-Bienaymé-Galton-Watson tree with root type $i$}.
 
\paragraph*{Criticality, Extinction, and Irreducibility}\label{sec:DefIrreducibilityCriticality}
For ease of notation, we index $\cK=\{\kappa_1,\ldots,\kappa_K\}$. We define the \emph{mean matrix} of $\bm{\zeta}$ as the matrix $M=\left(m_{i,j}\right)_{1\leq i,j \leq K}$, where $m_{i,j}$ is the expected number of type $\kappa_j$ children of a type $\kappa_i$ vertex under $\zeta^{(\kappa_i)}$:
\begin{equation*}
	m_{i,j} \coloneqq \mathbb{E}\!\left(\vert\sw^{(i)}\vert_j\right) = \sum\limits_{\sw\in\sW_\cK}{\vert\sw\vert_{\kappa_j}\,\zeta^{(\kappa_i)}(\sw)},
\end{equation*}
where $\sw^{(i)}$ is a random variable on $\sW_\cK$ with distribution $\zeta^{(\kappa_i)}$. 

We say that $\bm{\zeta}$, or equivalently $M$, is \emph{critical} if the spectral radius of $M$ is $1$. 

We turn to describe a well-known sufficient condition on  $\bm{\zeta}$ that guarantees that a $\bm{\zeta}$-Bienaymé-Galton-Watson tree is almost surely finite. Specifically, we say that $\bm{\zeta}$ is \emph{non-degenerate} if there exists a type that can produce at least two children, i.e., if there exists $i\in\cK$ such that $\zeta^{(i)}(\{\sw: \vert\sw\vert>1\})>0$.

\begin{proposition}[\cite{Miermont08}] If $\bm{\zeta}$ is non-degenerate and critical, then a $\bm{\zeta}$-Bienaymé-Galton-Watson tree, regardless of the root type, is almost surely finite.
\end{proposition}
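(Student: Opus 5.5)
The statement is the classical result that a non-degenerate critical multitype Bienaymé--Galton--Watson tree is almost surely finite. I would prove it through the generating-function fixed-point characterisation of extinction probabilities, assuming irreducibility of $M$. The paper's definition of non-degeneracy is stated without irreducibility, but the reference and the application to well-charged trees implicitly use an irreducible (or at least positively regular) mean matrix.

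\textbf{Setup.} Forgetting the planar order, the generation sizes $Z_n=(Z_n^{(1)},\ldots,Z_n^{(K)})$ form a multitype branching process with mean matrix $M$. For $s\in[0,1]^K$, let $f_i(s)=\sum_{\sw}\zeta^{(\kappa_i)}(\sw)\prod_j s_j^{\vert\sw\vert_{\kappa_j}}$ and $f=(f_1,\ldots,f_K)$. Let $q_i$ be the probability of extinction starting from type $\kappa_i$. Then $q=\lim_n f^{\circ n}(0)$, so $q$ is the smallest fixed point of $f$ in $[0,1]^K$. The goal is to show $q=\mathbf 1$.

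\textbf{Key steps.} Suppose for contradiction that $q\neq\mathbf 1$.
\begin{enumerate}
\item First show $q_i<1$ for all $i$. If some $q_i=1$, irreducibility would give $q_j=1$ for every $j$ reachable from $i$; since every type is reachable, $q=\mathbf 1$.
\item Set $u=\mathbf 1-q$, which has all entries positive. By convexity of each $f_i$ along the segment from $q$ to $\mathbf 1$, we get $u=f(\mathbf 1)-f(q)\le Mu$ componentwise.
\item Let $v$ be the positive left Perron eigenvector of $M$, so $vM=v$ by criticality. Then $v\cdot(Mu-u)=0$, and since $Mu-u\ge 0$ with $v>0$, we get $Mu=u$.
\item Therefore equality holds in every convexity inequality. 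This forces each $f_i$ to be affine on the segment $[q,\mathbf 1]$. Because the direction $u$ is strictly positive, every monomial of degree at least $2$ must have zero coefficient. So $\zeta^{(i)}(\{\sw:\vert\sw\vert>1\})=0$ for every type, which contradicts non-degeneracy.
\end{enumerate}

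\textbf{Main obstacle.} The delicate point is the case where $M$ is not irreducible. In that case I would decompose the types into communicating classes. For each final class, I would apply the argument above to the sub-process restricted to that class. For the remaining classes, I would use that the spectral radius of each diagonal block is at most $1$ and that mass eventually flows into lower classes. This step needs care: a reducible critical process can survive, for instance through a degenerate single-child chain within one class. So I would either add irreducibility as an explicit hypothesis, as in~\cite{Miermont08}, or check it directly for the offspring distribution used in Appendix~\ref{appA}. There, blacks produce one white and whites produce blacks with positive probability, so the type graph is strongly connected.
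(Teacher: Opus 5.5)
Your Steps 1--4 are a correct, self-contained proof once irreducibility (and finite means) is assumed. Finiteness of the tree is equivalent to extinction because offspring lists are finite. The map $t\mapsto f_i(q+tu)$ is a power series with nonnegative coefficients, so $u\le Mu$. Pairing with a positive left Perron vector gives $Mu=u$. Equality in the convexity bound then kills every monomial of degree at least $2$, because $u>0$. The paper gives no proof of its own here, only the citation to Miermont, so this is a genuinely different, self-contained route. A good feature is that it only needs a positive left eigenvector, so it also covers periodic irreducible $M$ and not just positively regular ones; this is exactly what is needed below. You are also right that the proposition as printed is false without irreducibility. Take one type with exactly one child of its own type, and a second type with $0$ or $2$ children of its own type, each with probability $1/2$. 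Then $M=I$, which is critical and non-degenerate, yet the tree rooted at the first type is an infinite line.

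The error is in your last sentence. The distribution of Appendix~\ref{appA} is \emph{not} irreducible, and the paper says so explicitly. The stem type $\s$ is infertile and the root type $\varnothing$ is produced by no type, so $\{\varnothing\}$, $\{\bullet,\circ\}$ and $\{\s\}$ are distinct communicating classes. Your argument then breaks as written. Since $q_\s=1$ always, Step 1 fails and $u=\mathbf 1-q$ is never strictly positive. The left eigenvector for the eigenvalue $1$ is proportional to $(0,1,1,2(d-2))$ in the order $(\varnothing,\bullet,\circ,\s)$, so Step 3 does not give $Mu=u$ in every coordinate. Hence neither of your two options covers the one place where the paper actually uses the proposition.

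The repair is short. Stems are leaves, and $\varnothing$ occurs only at the root with a single non-stem child of type $\circ$. So deleting stems yields a two-type BGW tree on $\{\bullet,\circ\}$ that is finite if and only if the original one is. A $\circ$ has a Binomial$(d-1,1/(d-1))$ number of black children, so the reduced mean matrix is $\begin{pmatrix}0&1\\1&0\end{pmatrix}$. This matrix is irreducible with spectral radius $1$; it has period $2$, so it is not positively regular. Moreover, a $\circ$ has at least two black children with positive probability because $d\ge 3$. Your Steps 1--4 apply verbatim to this reduced process.

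Alternatively, keep all four types and run Steps 2--4 with nonnegative $u$ and $v$. If $q\neq\mathbf 1$, then $q_\varnothing=q_\bullet=q_\circ<1$. Since $v_\circ>0$, the map $t\mapsto f_\circ(q+tu)=\sum_k c_k(q_\bullet+tu_\bullet)^k$, with $c_k=\binom{d-1}{k}(d-2)^{d-1-k}(d-1)^{-(d-1)}$, must be affine. This contradicts $c_2>0$ and $u_\bullet>0$.
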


Finally, the offspring distribution $\bm{\zeta}$, or equivalently $M$, is said to be \emph{irreducible} when for all $i,j\in\{1,\ldots,K\}$, there exists $p\in\Z_{>0}$ such that the $(i,j)$-th entry of $M^p$ is non-zero. 
\medskip

\begin{remark}
Usually, $\bm{\zeta}$ is required to be irreducible and critical. In that case, the Perron-Frobenius theorem applies and ensures that: 
\begin{itemize}
	\item The integer $1$ is an eigenvalue of $M$ with multiplicity one,
	\item The unique, up to multiplicity, left and right eigenvectors of $M$ associated to the eigenvalue $1$ have non-negative coefficients. More precisely, we denote them by $\bm{a}$ and $\bm{b}$, normalized such that $\sum_i{a_i}=1$ and $\sum_i{a_ib_i}=1$.
	\item Moreover, all the coefficients of $\bm{a}$ and $\bm{b}$ are positive.
\end{itemize}

In next section, $\bm{\zeta}$ will be critical but not irreducible. Still, it will satisfy the first two consequences of the Perron-Frobenius theorem. In this case, we then say that $\bm{\zeta}$ is \emph{almost-irreducible-critical}. 

For simplicity, we now write $a_k$ and $b_k$ in place of $a_{\kappa_i}$ and $b_{\kappa_i}$ whenever $k=\kappa_i$ holds.
\end{remark}

\paragraph*{Multitype Bienaymé-Galton-Watson tree conditioned to survive}
Under certain hypotheses (including irreducibility and criticality),
the distribution of a $\bm{\zeta}$-Bienaymé-Galton-Watson tree converges weakly, for the local topology, to the distribution of a multitype Bienaymé-Galton-Watson tree (with more types), called \emph{a $\bm{\zeta}$-Bienaymé-Galton-Watson tree conditioned to survive}, see~\cite{Stephenson18}. Its distribution is denoted by $\widehat{\P}_{\bm{\zeta}}^{(i)}$ when the root is of type $i$.

When $\bm{\zeta}$ is non-degenerate and almost-irreducible-critical $\widehat{\P}_{\bm{\zeta}}^{(i)}$ exists, see~\cite{Stephenson18}. Precisely, a tree $\bm{T}$ sampled with this distribution is an infinite multitype tree that can be described as follows: it consists of an infinite line of descent starting from the root, called the spine, along which independent critical trees with offspring distribution $\bm{\zeta}$ are grafted. On the spine, the vertices follow a \emph{sized-biased} version of $\bm{\zeta}$, denoted by $\bm{\widehat{\zeta}}$, and defined by:
\begin{equation}\label{eq:SizebiasedLaw}
	{\widehat{\zeta}}^{(i)}(\sw)\coloneqq \Big(\frac{1}{b_{i}}\sum\limits_{j=1}^{\vert\sw\vert}b_{\sw_j}\Big)\,\zeta^{(i)}(\sw).
\end{equation}
Then, for each vertex on the spine with offspring $\sw\in\cK$, one of its children is chosen to continue the spine: the index $j$ is  chosen with probability proportional to $b_{\sw_j}$, for $1\leq j\leq \vert\sw\vert$.

\subsection{Proof of Proposition~\ref{prop:PropertiesOnInfiniteRandomTree}}
We proceed as follows. First, we describe an explicit offspring distribution $\bm{\zeta}$. Then we verify its criticality properties. Finally, we identify the $\bm{\zeta}$-multitype Galton-Watson tree conditioned to survive, and we conclude by computing the probability of observing a given ball around its root vertex.

\paragraph*{Definition of the offspring distribution}
First, we define the set of types: let $\cK\coloneqq\{\varnothing,\bullet,\circ,\s\}$. These types are interpreted in the following way: 
\begin{itemize}
	\item $\varnothing$ represents the root vertex, 
	\item $\bullet$ and $\circ$ represent \emph{classical} non-root black vertices and white vertices, respectively,
	\item $\s$ represents the opening and closing stems.
\end{itemize}

\begin{remark}To simplify further explanation, we identify the types and their representations. Conversely, we identify any tree in $\Tc$ with its natural representation as a $\cK$-type tree.
\end{remark}
\medskip

Next, we define the offspring distributions. Roughly speaking,
\begin{itemize}
	\item stems, i.e., vertices with type $\s$, are infertile,
	\item the offspring of a vertex with type $\bullet$ or $\varnothing$ is sampled uniformly from the set of possible offspring described in Lemma~\ref{Cla:ChargeOffspringRegular},
	\item the offspring of a vertex with type $\circ$ can be any offspring described in Lemma~\ref{Cla:ChargeOffspringRegular}, and it is sampled with probability proportional to $\rB(z)^k$ if it consists of $k$ black vertices together with $d-1-k$ stems.
\end{itemize}
This is written formally as follows, where for any an ordered sequence of types $\sw\in\sW_{\{\varnothing,\bullet,\circ,\s\}}$, we denote  
\begin{equation*}
L(\sw)\coloneqq(\vert\sw\vert_\varnothing,\vert\sw\vert_\bullet,\vert\sw\vert_\circ,\vert\sw\vert_\s),
\end{equation*} 
its associated vector that counts the occurrence of each type:
\begin{align}\label{eq:DefOsspringDistributionRegular}
	&\zeta^{(\varnothing)}(\sw) \coloneqq \1_{\{L(\sw)=(0,0,1,d-1)\}} \frac{1}{d},\notag \\ \notag
	&\zeta^{(\bullet)}(\sw) \coloneqq \1_{\{L(\sw)=(0,0,1,d-2)\}} \frac{1}{d-1},\\ 
	&\zeta^{(\circ)}(\sw) \coloneqq \sum_{k=0}^{d-1}\1_{\{L(\sw)=(0,k,0,d-1-k)\}} \frac{(d-2)^{d-1-k}}{(d-1)^{d-1}},\\ 	\notag
	&\zeta^{(\s)}(\sw) \coloneqq \1_{\{\sw=\varnothing\}}. \notag
\end{align}
The formula obtained for vertices with types $\circ$ follows from explicit computations by applying the Lagrange inversion formula on~\eqref{LagrangianEqB1}.

\begin{remark}
As $\s$ is an infertile type, it does not affect criticality but prevents irreducibility of $\bm{\zeta}$.
\end{remark}

Since black vertices produce only white children and white vertices produce only black children, $\bm{\zeta}$-multitype Galton-Watson tree is bipartite. Moreover, by construction, it is almost surely well-charged, in the sense defined in~\eqref{eq:ChargedConditions}. Note that here a stem incident to a black vertex naturally represents an opening stem and a stem incident a white vertex naturally represents a closing stem.

\paragraph*{Properties of the offspring distribution}
We prove that $\bm{\zeta}$ is non-degenerate and almost-irreducible-critical, so that $\widehat{\P}_{\bm{\zeta}}^{(i)}$ exists. First, non-degeneracy is immediate by definition. Then, by direct computations, the mean matrix of $\bm{\zeta}$ is equal to:
\begin{equation*}
	\begin{pNiceArray}{cccc}
  		0 & 0 & 1 & d-1 \\
  		0 & 0 & 1 & d-2 \\
  		0 & 1 & 0 & d-2 \\
  		0 & 0 & 0 & 0  
	\end{pNiceArray},
\end{equation*} 
where the order of the types is $\varnothing$, $\bullet$, $\circ$ and then $\s$. 
Clearly, $1$ is an eigenvalue of multiplicity one, and therefore  $\bm{b}_\varnothing=\bm{b}_\bullet=\bm{b}_\circ=d-1$ and $\bm{b}_\s=0$. Thus, $\bm{\zeta}$ is almost-irreducible-critical.

We finish by computing the sized-biased version of $\bm{\zeta}$:
\begin{equation*}
\widehat{\zeta}^{(i)}(\sw)=
	\begin{cases}
			{\zeta}^{(i)}(\sw) & \text{for } i=\s,\\
			\vert\sw\vert_\circ\,{\zeta}^{(i)}(\sw)={\zeta}^{(i)}(\sw) & \text{for } i\in\{\bullet,\varnothing\},\\
			\vert\sw\vert_\bullet\,{\zeta}^{(i)}(\sw) & \text{for } i=\circ,
	\end{cases}
\end{equation*}
since root and black vertices have exactly one white child, we have $\vert\sw\vert_\circ=1$ almost surely under $\zeta^{(\varnothing)}$ and $\zeta^{(\bullet)}$.

\paragraph*{End of the proof}
Let $\bm{T}$ be $\bm{\zeta}$-multitype Galton-Watson tree conditioned to survive. First, the previous paragraphs yield that $\bm{T}$ verifies the structural claims of Proposition~\ref{prop:PropertiesOnInfiniteRandomTree}. We conclude by computing the following probability:
\begin{equation}
	\widehat{\P}_{\bm{\zeta}}^{(0)}\big(B_k(\bm{T})= B_k(\rt)\big),
\end{equation}
where $k\in\Z_{\geq 0}$ and $\rt\in\T$ are fixed.
Following Section~\ref{subsec:ProofLocalConvTree}, we denote by $m_k(\rt)$ the number of vertices of $\rt$ at height $k$, and we set $n_k(\rt)=\vert\rV_{\!\bullet}(B_k(\rt))\vert$. First, the spine intersects level $k$ at exactly one vertex, and conditionally on the truncated tree, all $m_k(\rt)$ vertices at height $k$ are equally likely to be on the spine. Thus,
\begin{equation}
	\widehat{\P}_{\bm{\zeta}}^{(0)}\big(B_k(\bm{T})= B_k(\rt)\big) = m_k(\rt)\cdot{\P}_{\bm{\zeta}}^{(0)}\big( B_k(\bm{T}) = B_k(\rt)\big),
\end{equation}
This quantity depends on the parity of $k$ since $\bm{T}$ is almost surely bipartite. For odd values of $k$, we then get
\begin{align*}
	\widehat{\P}_{\bm{\zeta}}^{(0)}\big(& B_k(\bm{T})= B_k(\rt)\big)   \\
		&= \frac{m_k(\rt)}{d}\left(\frac{1}{d-1}\right)^{n_k(\rt)-1}\left(\frac{(d-2)^{d-1}}{(d-1)^{d-1}}\right)^{\vert\rV_{\!\circ}(B_{k-1}(\rt))\vert}(d-2)^{\sum_{v\in\rV_{\!\circ}(B_{k-1}(\rt))}{\vert\sw(v)\vert_{\bullet}}}\\
		&= m_k(\rt)\,{\rho_d}^{n_k(\rt)}\,\frac{(d-1)(d-2)}{d}  \left(\frac{d-1}{d-2}\right)^{(d-1)\, m_k(\rt)},
\end{align*}
since by counting,  $n_k(\rt)=m_k(\rt)+\vert\rV_{\!\circ}(B_{k-1}(\rt))\vert$ and $\sum_{v\in\rV_{\!\circ}(B_{k-1}(\rt))}{\vert\sw(v)\vert_{\bullet}}=n_k(\rt)-1$. Thus, we get that 
\begin{equation*}
	\P\big(B_k(\bm{T}_n) = B_k(\rt)\big) \underset{n\to\infty}{\longrightarrow} \widehat{\P}_{\bm{\zeta}}^{(0)}\big( B_k(\bm{T}) = B_k(\rt)\big).
\end{equation*}
The case where $k$ is even is similar and is left to the reader.
This completes the proof of Proposition~\ref{prop:PropertiesOnInfiniteRandomTree}.

\begin{remark}
Note that if $\bm{\zeta}$ was irreducible and critical, we could have provided a more direct proof of Theorem~\ref{theo:WeakConvergencedRegularTrees} and  Proposition~\ref{prop:PropertiesOnInfiniteRandomTree}. Indeed, it is easily seen that a $\bm{\zeta}$-Bienaymé-Galton-Watson tree conditioned to have $n$ black vertices is uniformly distributed on $\cT_n$, up to the identification introduced above. Then the general result of~\cite[Theorem 3.1]{Stephenson18} would imply  their convergence in distribution, for the local topology, to the law of the $\bm{\zeta}$-Bienaymé-Galton-Watson tree conditioned to survive. 

However, $\bm{\zeta}$ is not irreducible, and our proof therefore relies on explicit enumerative formulas for maps and trees in the case of $d$-regular maps.

Alternatively we could have reduced the model to an irreducible one, by removing the stems, then apply the Stephenson theorem and finally glue back the stems. We expect this strategy to work in the context of arbitrary vertex degrees.
\end{remark}

\end{document}